\newtheorem{pr}{Proposition}
\newtheorem{lemma}{Lemma}
\newtheorem{de}{Definition}
\newtheorem{teo}{Theorem}
\newtheorem{remark}{Remark}
\begin{document}
\title{Leibniz algebras associated with representations of the Diamond Lie algebra}

\author{Selman Uguz\footnote{Corresponding author, Department of Mathematics,
Arts and Sciences Faculty Harran University, 63120 \c{S}anliurfa,
Turkey, e-mail: selmanuguz@gmail.com, Tel: +90 414 318 3595 Fax:
+90 414 318 3541}, \ Iqbol A. Karimjanov \footnote{Department of
Algebra, University of Santiago de Compostela, 15782,
 {Spain}, e-mail: iqboli@gmail.com}, Bakhrom A. Omirov
\footnote{Bakhrom A. Omirov, Institute of Mathematics, National
University of Uzbekistan, Dormon yoli str. 29, 100125, Tashkent
(Uzbekistan), e-mail: omirovb@mail.ru}}

\maketitle
\begin{abstract} In this paper we describe some Leibniz algebras whose corresponding Lie algebra is
four-dimensional Diamond Lie algebra $\mathfrak{D}$ and the ideal generated by the squares of elements
(further denoted by $I$) is a right $\mathfrak{D}$-module. Using description \cite{Cas} of representations
of algebra $\mathfrak{D}$ in $\mathfrak{sl}(3,{\mathbb{C}})$ and $\mathfrak{sp}(4,{\mathbb{F}})$ where
${\mathbb{F}}={\mathbb{R}}$ or ${\mathbb{C}}$ we obtain the classification of above mentioned Leibniz algebras.
Moreover, Fock representation of Heisenberg Lie algebra was extended to the case of the algebra $\mathfrak{D}.$
Classification of Leibniz algebras with corresponding Lie algebra $\mathfrak{D}$ and with the ideal $I$ as a
Fock right $\mathfrak{D}$-module is presented. The linear integrable deformations in terms of the second cohomology groups of obtained finite-dimensional Leibniz algebras are described. Two computer programs in Mathematica 10 which help to calculate for a given Leibniz algebra the general form of elements of spaces $BL^2$ and $ZL^2$ are constructed, as well.
\end{abstract}

\medskip \textbf{AMS Subject Classifications (2010): 17A32, 17B30, 17B10.}

\textbf{Key words:} Diamond Lie algebra, Leibniz algebra, representation of Diamond Lie algebra, Fock representation, Heisenberg Lie algebra, linear deformation, the second group of cohomology.

\section{Introduction}

Leibniz  algebras are non-commutative analogue of Lie algebras, in the sense that adding antisymmetry to Leibniz bracket leads to coincidence of fundamental identity (Leibniz identity) with  Jacobi identity. Therefore, Lie algebra is a particular case of Leibniz algebra. Leibniz algebras were introduced by J.-L. Loday \cite{Loday} in 1993 and since then the study of Leibniz algebras has been carried on intensively. Investigation of Leibniz algebras shows that classical results on Cartan subalgebras, Levi's decomposition, Engel's and Lie's theorems, properties of solvable algebras with given nilradical and others from theory of Lie algebras have been extended to Leibniz algebras case (see \cite{Alb}, \cite{Bar1}, \cite{Bar2}, \cite{Nilrad1}, \cite{Nilrad2}, \cite{Gorbat}, \cite{Omir}). It is known that for a given Leibniz algebra the  categories of symmetric representations and anti-symmetric representations are both equivalent to the category of Lie representations over its corresponding Lie algebra \cite{LodPir}.

Recall that an algebra $L$ over a field $F$ is called a {\it Leibniz algebra} if it satisfies the following Leibniz identity: $$[x,[y,z]]=[[x,y],z]-[[x,z],y],$$ where $[ \ , \ ]$ denotes the multiplication in $L$. In fact, the ideal $I$ generated by the squares of elements of a non-Lie Leibniz algebra $L$ plays an important role since it determines the (possible) non-Lie nature of $L$. From the Leibniz identity, this ideal is contained in right annihilator of the algebra $L$.

For a Leibniz algebra $L$ we consider the natural homomorphism $\varphi$ into the quotient Lie algebra $\overline{L}=L/I$, which is called {\it corresponding Lie algebra} to Leibniz algebra $L$ (in some papers it is called a {\it liezation} of $L$).

The map $I \times \overline{L} \to I$, $(i,\overline{x}) \mapsto [i,x]$ endows $I$ with a structure of a right $\overline{L}$-module (it is well-defined due to $I$ being in a right annihilator).

Denote by $Q(L) = \overline{L} \oplus I,$ then the operation $(-,-)$ defines Leibniz algebra structure on $Q(L),$ where $$(\overline{x},\overline{y}) = \overline{[x,y]}, \quad (\overline{x},i) = [x,i],
\quad (i, \overline{x}) = 0, \quad (i,j) = 0, \qquad x, y \in L, \ i,j \in I.$$

Therefore, for a given Lie algebra $G$ and a right $G$-module $M,$ we can construct a Leibniz algebra as described above.

In \cite{Bar1} D. Barnes showed that any finite-dimensional complex Leibniz algebra is decomposed into a semidirect sum of the solvable radical and a semisimple Lie algebra (the analogue of Levi's theorem). Hence, we conclude that if the quotient algebra is isomorphic to a semisimple Lie algebra, then knowing a module over this semisimple Lie algebra one can easily obtain the description of the Leibniz algebras with corresponding semisimple Lie algebra. Therefore, it is important to study the case when the corresponding Lie algebra is solvable. Moreover, the solvability of a given Leibniz algebra is equivalent to the solvability of its corresponding  Lie algebra.

One of the approaches related to this construction is the description of such Leibniz algebras whose corresponding
Lie algebra is a given Lie algebra.

This approach was used in works \cite{Filiform}, \cite{Heisenberg} where some Leibniz algebras with corresponding Lie algebra being filiform and Heisenberg $H_n$ Lie algebras, respectively,  are described. In particular, the classification theorems were obtained for Leibniz algebras whose corresponding Lie algebras are Heisenberg and naturally graded filiform algebras and that the ideal $I$ is isomorphic to Fock module as a module over corresponding Lie algebra.

Deforming a given mathematical structure is a tool of fundamental importance in most parts of mathematics, mathematical physics and physics. Deformations and contractions have been investigated by researchers who had different approaches and goals. Tools such as cohomology, gradings, etc. which are utilized in the study of one concept, are likely to be useful for the other concept as well. The theory of deformations originated with the problem of classifying all possible pairwise non-isomorphic complex structures on a given differentiable real manifold. The concept of formal deformations of arbitrary rings and associative algebras was first investigated in 1964 by Gerstenhaber \cite{Gersten}. Later, the notion of deformation was applied to Lie algebras by Nijenhuis and Richardson \cite{Nijen}. After works of these authors the formal deformation theory was generalized in different categories. In fact, in the last fifty years, deformation theory has played an important role in algebraic geometry. The main goal is the classification of families of geometric objects when the classifying space (the so called moduli space) is a reasonable geometric space. In particular, each point of our moduli space corresponds to one geometric object (class of isomorphism). Deformation is one of the tools used to study a specific object, by deforming it into some families of "similar" structure objects. This way we get a richer picture about the original object itself. But there is also another question approached via deformation. Roughly speaking, it is the question, can we equip the set of mathematical structures under consideration (may be up to certain equivalence) with the structure of a topological or geometric space. The theory of deformations is another one of the effective approach in investigating of solvable and nilpotent Lie algebras and superalgebras. Since Leibniz algebras are generalization of Lie algebras and it is natural to apply the theory of deformations to the study of Leibniz algebras. Thanks to Balavoine \cite{Balavoine} we can apply the general principles for deformations and rigidity of Leibniz algebras. For instance, one-parameter deformations establish connection between Leibniz algebra cohomology and infinitesimal deformations \cite{Khud1}, \cite{Khud2}.

In this paper we describe Leibniz algebras with corresponding four-dimensional Diamond Lie algebra $\mathfrak{D}$
and with the ideal $I$ associated to representations of $\mathfrak{D}$ in $\mathfrak{sl}(3, {\mathbb{C}})$ and
$\mathfrak{sp}(4, {\mathbb{F}})$ with ${\mathbb{F}}={\mathbb{R}}$ and ${\mathbb{F}}={\mathbb{C}}$ (see \cite{Cas}).
 We recall the relation $[\mathfrak{D},\mathfrak{D}]=H_1$ between algebras $\mathfrak{D}$ and $H_1$. For the other
 properties of the Diamond Lie algebra and its relation with the Diamond Lie group we refer the reader
 to works \cite{Avit}, \cite{Cas}, \cite{Ludwig}, \cite{Napi} and references therein. In addition,
 we extend the notion of Fock representation of Heisenberg algebra to representation of the algebra $\mathfrak{D}$.
  Using these representations we classify Leibniz algebras with corresponding Lie algebra $\mathfrak{D}.$ Applying
  relationship between formal deformations and the second group of cohomologies we describe linear integrable
  deformations for classified finite-dimensional Leibniz algebras. In order to avoid routine calculations for
  identifying representatives of the quotient spaces $HL^2$ we developed two computer programs in Mathematica 10.

Throughout the paper omitted products in in the table of multiplication of Leibniz algebras, as well as omitted values in the expansion of 2-cocycles are assumed to be zero.

\section{Preliminaries}

In this section we give necessary definitions and preliminary results.

\begin{de} \cite{Loday} An algebra $(L,[-,-])$ over a field  $\mathbb{F}$   is called a Leibniz algebra if for any $x,y,z\in L$, the so-called Leibniz identity
\[ \big[[x,y],z\big]=\big[[x,z],y\big]+\big[x,[y,z]\big] \] holds.
\end{de}

\medskip

Further for the right annihilator of an algebra $L$ we shall use notation $Ann_r(L).$

The real Diamond Lie algebra $\mathfrak{D}_{\mathbb{R}}$ is a four-dimensional Lie algebra with bases $\{J, P_1, P_2, T\}$ and non-zero relations:
\begin{equation}[J,P_1]=P_2, \quad [J,P_2]=-P_1, \quad [P_1,P_2]=T. \end{equation}

The complexification of the Diamond Lie algebra:
$\mathfrak{D}_{\mathbb{C}}=\mathfrak{D}\otimes_\mathbb{R} \mathbb{C}$ displays the following (complex) bases:
$\{P_{+}=P_1-iP_2, \ P_{-}=P_1+iP_2, \ T, \ J\},$
where $i$ is the imaginary unit, whose nonzero commutators are
\begin{equation} \label{1.2} [J,P_{+}]=iP_{+}, \quad [J,P_{-}]=-iP_{-}, \quad [P_{+},P_{-}]=2iT.   \end{equation}

If we change base
$$J'=-iJ, \quad P_1'=P_{+}, \quad P_2'=P_{-}, \quad T'=2iT$$
then we can assume that
 \begin{equation} \label{1.3} [J,P_1]=P_1, \quad [J,P_2]=-P_2, \quad [P_1,P_2]=T.   \end{equation}

\subsection{$\bf{\mathfrak{sl}(3,\mathbb{C})}$-modules as $\mathfrak{D}_{\mathbb{C}}$-modules}
Let $\{H_1, H_2, E_1, E_2, E_{12}, F_1, F_2, F_{12}\}$ be a standard basis of $(\mathfrak{sl}_3,\mathbb{C})$ \cite{Jac} defined by $$aH_1+bH_2+cE_1+dE_2+eE_{12}+fF_1+gF_2+hF_{12}=\left(\begin{matrix}a&c&e&\\f&b-a&d\\h&g&-b\end{matrix}\right).$$

Indecomposable finite-dimensional representations of $\mathfrak{D}_{\mathbb{C}}$ by restricting those of $\mathfrak{sl}(3,\mathbb{C})$ to the two inequivalent embedding of $\mathfrak{D}_{\mathbb{C}}$ in  $\mathfrak{sl}(3,\mathbb{C})$ given the next lemma.

\begin{lemma}\label{lemma1}\cite{Cas}
The maps $\varphi:\mathfrak{D}_{\mathbb{C}}\rightarrow \mathfrak{sl}(3,\mathbb{C})$ and $\psi:\mathfrak{D}_{\mathbb{C}}\rightarrow \mathfrak{sl}(3,\mathbb{C})$ defined by
$$\varphi(P_+)=E_1, \quad \varphi(P_-)=F_{12}, \quad \varphi(J)=\frac{i}{3}(2H_1+H_2), \quad \varphi(T)=\frac{i}{2}F_2,$$
$$\psi(P_+)=E_1, \quad \psi(P_-)=E_2, \quad \psi(J)=\frac{i}{3}(H_1-H_2), \quad \psi(T)=-\frac{i}{2}E_{12},$$
are inequivalent Lie algebra embeddings.
\end{lemma}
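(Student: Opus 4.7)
My plan is to verify Lemma~\ref{lemma1} in three self-contained steps: (i) show each map respects the three nontrivial brackets of $\mathfrak{D}_{\mathbb{C}}$; (ii) show each is injective; (iii) exhibit a $GL(3,\mathbb{C})$-conjugation invariant that separates them.

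For step (i), I would first translate the standard basis of $\mathfrak{sl}(3,\mathbb{C})$ into matrix units using the explicit matrix displayed in the excerpt, namely $H_1=e_{11}-e_{22}$, $H_2=e_{22}-e_{33}$, $E_1=e_{12}$, $E_2=e_{23}$, $E_{12}=e_{13}$, $F_1=e_{21}$, $F_2=e_{32}$, $F_{12}=e_{31}$, and use $[e_{ij},e_{kl}]=\delta_{jk}e_{il}-\delta_{il}e_{kj}$. For $\varphi$ this gives $\varphi(J)=\tfrac{i}{3}\,\mathrm{diag}(2,-1,-1)$, which commutes with $\varphi(T)$ and has weights $\pm i$ on $e_{12}$ and $e_{31}$; the remaining relation follows from $[e_{12},e_{31}]=-e_{32}=2i\cdot\tfrac{i}{2}F_2=2i\,\varphi(T)$, matching equation~(\ref{1.2}). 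For $\psi$ one has $\psi(J)=\tfrac{i}{3}\,\mathrm{diag}(1,-2,1)$ with analogous weights on $e_{12}$ and $e_{23}$, and $[e_{12},e_{23}]=e_{13}=2i\cdot(-\tfrac{i}{2})E_{12}=2i\,\psi(T)$. Step (ii) is then immediate, since in each case the four images lie in four distinct weight spaces of the diagonal Cartan and are therefore linearly independent, so both kernels are trivial.

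For step (iii), I would compare the spectrum of the image of $J$ on $\mathbb{C}^3$: the eigenvalues of $\varphi(J)$ are $\{2i/3,-i/3,-i/3\}$, whereas those of $\psi(J)$ are $\{i/3,i/3,-2i/3\}$. An equivalence of representations $\psi(x)=T\varphi(x)T^{-1}$ with $T\in GL(3,\mathbb{C})$ would in particular conjugate $\varphi(J)$ to $\psi(J)$ and hence preserve eigenvalues; since the two multisets differ, no such $T$ exists, so the two embeddings are inequivalent. As a cross-check, one could equally well use the invariant that $\varphi(P_+)\varphi(P_-)=e_{12}e_{31}=0$ while $\psi(P_+)\psi(P_-)=e_{12}e_{23}=e_{13}\neq 0$, and conjugation by $GL(3,\mathbb{C})$ is an associative-algebra automorphism of $\mathrm{Mat}_3(\mathbb{C})$.

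The main difficulty is really only bookkeeping—tracking the factors of $i$ and the sign conventions in~(\ref{1.2}) and in the matrix-unit multiplication table—rather than any conceptual step. The only point requiring any thought is the choice of invariant in (iii), and picking the spectrum of $J$ reduces this to a one-line comparison of three-element multisets.
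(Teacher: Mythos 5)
The paper does not prove this lemma at all: it is imported verbatim from the reference \cite{Cas} (with two misprints corrected in the surrounding remarks), so there is no in-paper argument to compare against. Your verification is correct and self-contained. The dictionary $H_1=e_{11}-e_{22}$, $H_2=e_{22}-e_{33}$, $E_1=e_{12}$, $E_2=e_{23}$, $E_{12}=e_{13}$, $F_2=e_{32}$, $F_{12}=e_{31}$ is the right reading of the displayed matrix, and the three bracket checks come out exactly as you state (e.g.\ $[e_{12},e_{31}]=-e_{32}=2i\cdot\tfrac{i}{2}F_2$ and $[e_{12},e_{23}]=e_{13}=2i\cdot(-\tfrac{i}{2})E_{12}$, which incidentally confirms the paper's correction of $\psi(T)$). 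Linear independence via distinct weight spaces gives injectivity, and the spectra $\{2i/3,-i/3,-i/3\}$ versus $\{i/3,i/3,-2i/3\}$ do separate the two maps under $GL(3,\mathbb{C})$-conjugation.

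One point worth making explicit: your argument establishes inequivalence in the sense of \emph{representations} on $\mathbb{C}^3$, i.e.\ non-conjugacy under $GL(3,\mathbb{C})$. This is the notion the paper actually needs, since the whole purpose of the lemma is to produce the two non-isomorphic module structures (\ref{sl3module1}) and (\ref{sl3module2}). If instead one read ``equivalent embeddings'' as allowing the full automorphism group of $\mathfrak{sl}(3,\mathbb{C})$, the claim would fail: the outer automorphism $X\mapsto -X^{T}$ followed by a permutation and diagonal conjugation carries $\varphi$ onto $\psi$, and it also flips both of your invariants (the spectrum of the image of $J$ and the vanishing of $\varphi(P_+)\varphi(P_-)$). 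So your choice of invariants is not merely convenient but forces the correct (representation-theoretic) reading of the statement; it would be worth one sentence in the write-up saying which notion of equivalence is being used.
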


\begin{remark} Here we changed $\psi(T)=-\frac{i}{2}F_{12}$ misprint of \cite{Cas} to the correct expression $\psi(T)=-\frac{i}{2}E_{12}.$
\end{remark}

The embeddings of Lemma \ref{lemma1} correspond to two module structures over $\mathfrak{D}_{\mathbb{C}}$ on vector space $V=\{X_1, X_2, X_3\}$:

\begin{equation}\label{sl3module1}\left\{\begin{array}{lll}
(X_1,J)=\frac{2i}{3}X_1,    & (X_2,J)=-\frac{i}{3}X_2, & (X_3,J)=-\frac{i}{3}X_3,   \\[1mm]
(X_1,P_+)=X_2,  & (X_3,P_-)=X_1, & (X_3,T)=\frac{i}{2}X_2,
\end{array}\right.
\end{equation}
\begin{equation}\label{sl3module2}\left\{\begin{array}{lll}
(X_1,J)=\frac{i}{3}X_1,    & (X_2,J)=-\frac{2i}{3}X_2, & (X_3,J)=\frac{i}{3}X_3,    \\[1mm]
(X_1,P_+)=X_2, & (X_2,P_-)=X_3, & (X_1,T)=-\frac{i}{2}X_3.
\end{array}\right.
\end{equation}

\subsection{$\bf{sp(4,\mathbb{R})}$-modules as $\mathfrak{D}_{\mathbb{R}}$-modules}
The Diamond Lie algebra $\mathfrak{D}_{\mathbb{R}}$ can be realized as a subalgebra of the simple Lie algebra $\mathfrak{sp}(4,\mathbb{R})$ through the map \cite{Cas}:
$$\theta J+\alpha P_1+ \beta P_2+\gamma T=\left(\begin{matrix}0&\alpha&\beta&2\gamma&
\\0&0&-\theta&\beta&\\0&\theta&0&-\alpha&\\0&0&0&0\end{matrix}\right)$$

Precisely, if $\{J,P_1,P_2,T \}$ is a basis of $\mathfrak{D}_{\mathbb{R}}$, then as faithful representations we take linear transformations with the matrices on the linear space $V=\{X_1, X_2, X_3,X_4\}$. We
endow the vector space $V$ with right $\mathfrak{D}_{\mathbb{R}}$-module structure as follows:
\begin{equation}\label{sp4R}\left\{\begin{array}{lll}
(X_1,P_1)=X_2,  & (X_1,P_2)=X_3, & (X_1,T)=2X_4, \\[1mm]
(X_2,J)=-X_3,   & (X_2,P_2)=X_4, & \\[1mm]
(X_3,J)=X_2,    & (X_3,P_1)=-X_4.
\end{array}\right.
\end{equation}

\subsection{$\bf{\mathfrak{sp}(4,\mathbb{C})}$-modules as $\mathfrak{D}_{\mathbb{C}}$-modules}
The Chevalley basis of $\mathfrak{sp}(4,\mathbb{C})$ (see \cite{Jac}) is defined by
$$aH_1+bH_2+cE_1+dE_2+eE_{12}+fE_{112}+gF_1+hF_2+iF_{12}+jF_{112}=
\left(\begin{matrix}a&c&e&-f\\g&b-a&d&-e&\\i&h&a-b&0&\\-j&-i&0&-a\end{matrix}\right)$$
and $\eta:\mathfrak{D}_{\mathbb{C}}\rightarrow \mathfrak{sp}(4,\mathbb{C})$ simply becomes
$$\eta(P_{+})=E_1, \quad \eta(P_{-})=F_{12}, \quad \eta(J)=i(H_1+H_2), \quad \eta(T)=\frac{i}{2}F_2 $$

\begin{remark} Here we corrected misprinted coefficient of \cite{Cas} in $\eta(T)=i F_2$ to $\frac{i}{2}$.
\end{remark}

From the above embedding we construct right $\mathfrak{D}_{\mathbb{C}}$-module $V=\{X_1, X_2, X_3, X_4\}$ in the following way:
\begin{equation}\label{sp4C}
\left\{\begin{array}{lll}
(X_1,J)=iX_1,   & (X_4,J)=-iX_4, & (X_1,P_{+})=X_2, \\[1mm]
(X_3,P_{-})=X_1,    & (X_4,P_{-})=-X_2, & (X_3,T)=\frac{i}{2} X_2.
\end{array}\right.
\end{equation}

\subsection{Fock module over Heisenberg Lie algebra }

It is known that if we denote by $\overline{{x}}$ the operator associated to position and by ${\frac{\overline{\partial}}{\partial x}}$ the one associated to momentum (acting for instance on the space $V$ of differentiable functions on a single variable), then $[\overline{x},{\frac{\overline{\partial}}{\partial x}}]=\overline{1}_V$. Thus we can identify the subalgebra generated by $\overline{1},\overline{{x}}$ and
${\frac{\overline{\partial}}{\partial x}}$ with the three-dimensional Heisenberg Lie algebra $H_1$ whose multiplication table in the basis $\{\overline{1},\overline{x},\frac{\overline{\partial}}{\partial x}\}$ has a unique non-zero product $[\overline{x},\frac{\overline{\partial}}{\partial x}]=\overline{1}$.

For a given Heisenberg algebra $H_1$ this explanation gives rise to the so-called {\it Fock module} over $H_1$,  the linear space ${\mathbb{F}}[x]$ of polynomials on $x$ ($\mathbb{F}$ denotes
the algebraically closed field with zero characteristic) with the action induced by
\begin{equation}\label{Fock}
\begin{array}{lll} ( p(x),\overline{1})& \mapsto &
p(x)\\{} ( p(x),\overline{x})& \mapsto & xp(x)\\{}
(p(x),\frac{\overline{\partial}}{\partial x})& \mapsto & \frac
{\partial}{\partial x}(p(x))
\end{array}
\end{equation}
for any $p(x) \in \mathbb{F}[x].$

\subsection{Linear deformations of Leibniz algebras}

We call a vector space $M$ a module over a Leibniz algebra $L$ if there are two bilinear maps:
$[-,-]:L\times M \rightarrow M$ and $[-,-]:M\times L \rightarrow M$
satisfying the following three axioms
$$\begin{array}{ll}
[m,[x,y]] =[[m,x],y]-[[m,y],x],\\[1mm]
[x,[m,y]] =[[x,m],y]-[[x,y],m],\\[1mm]
[x,[y,m]] =[[x,y],m]-[[x,m],y],\\[1mm]
\end{array}
$$
for any $m\in M$, $x, y \in L$.

Given a Leibniz algebra $L$, let $C^n(L,M)$ be the space of all $F$-linear homogeneous mappings $L^{\otimes n} \rightarrow M$, $n \geq 0$ and $C^0(L,M) = M$.

Let $d^n : C^n(L,M) \rightarrow C^{n+1}(L,M)$ be an $F$-homomorphism
defined by
 \begin{multline*}
(d^nf)(x_1, \dots , x_{n+1}): = [x_1,f(x_2,\dots,x_{n+1})]
+\sum\limits_{i=2}^{n+1}(-1)^{i}[f(x_1,
\dots, \widehat{x}_i, \dots , x_{n+1}),x_i]\\
+\sum\limits_{1\leq i<j\leq {n+1}}(-1)^{j+1}f(x_1, \dots,
x_{i-1},[x_i,x_j], x_{i+1}, \dots , \widehat{x}_j, \dots
,x_{n+1}),
\end{multline*}
where $f\in C^n(L,M)$ and $x_i\in L$. Since the derivative operator $d=\sum\limits_{i \geq 0}d^i$ satisfies the property $d\circ d = 0$, the $n$-th cohomology group is well defined and $$HL^n(L,M) = ZL^n(L,M)/ BL^n(L,M),$$
where the elements $ZL^n(L,M)$ and $BL^n(L,M)$) are called {\it $n$-cocycles} and {\it $n$-coboundaries}, respectively.

The elements $f\in BL^2(L,L)$ and $\varphi \in ZL^2(L,L)$ are defined as follows \begin{equation}\label{E.B2} f(x,y) = [d(x),y] + [x,d(y)] - d([x,y]) \ \mbox{for some linear map} \ d\end{equation} and
\begin{equation}\label{E.Z2}(d^2\varphi)(x,y,z)=[x,\varphi(y,z)]
 - [\varphi(x,y), z] + [\varphi(x,z), y] + \varphi(x, [y,z]) - \varphi([x,y],z) + \varphi([x,z],y)=0. \end{equation}

{\it A formal deformation of a Leibniz algebra} $L$ is a one-parameter family $L_t$ of Leibniz algebras with the bracket $$\mu_t = \mu_0 + t\varphi_1 + t^2\varphi_2 + \cdots,$$ where $\varphi_i$ are 2-cochains, i.e., elements of $Hom(L \otimes L, L)= C^2(L, L)$.

Two deformations $L_t, \ L'_t$ with corresponding laws $\mu_t, \ \mu'_t$ are {\it equivalent} if there exists a linear automorphism $f_t = id + f_1 t + f_2 t^2 + \cdots$ of $L$, where $f_i$ are elements of $C^1(L, L)$ such that the following equation holds
$$\mu'_t(x, y) = f_t^{-1}(\mu_t(f_t(x), f_t(y))) \ \ \text{for} \  x, y \in L.$$

The Leibniz identity for the algebras $L_t$ implies that the 2-cochain $\varphi_1$ should satisfy the equality $d^2\varphi_1 = 0,$ i.e. $\varphi_1\in ZL^2(L,L)$. If $\varphi_1$ vanishes identically, then the first non vanishing $\varphi_i$ is 2-cocycle.

If $\mu'_t$ is an equivalent deformation with cochains $\varphi_i'$, then $\varphi_1' -\varphi_1 = d^1f_1$, hence every equivalence class of deformations defines uniquely an element of $HL^2(L, L)$.

It should be noted that the condition that linear deformation (that is, $\mu_t = \mu_0 + t\varphi_1$) is a Leibniz algebra (we say $\mu_t$ is {\it integrable}) implies two restrictions on 2-cochain $\varphi_1$:
the fist one is $\varphi_1\in ZL^2(L,L)$ and the second one is
\begin{equation}\label{Leibnizcocycle}\varphi_1(x, \varphi_1(y,z)) - \varphi_1(\varphi_1(x,y),z) + \varphi_1(\varphi_1(x,z),y)=0. \end{equation}

\section{Main result}

In this section we present descriptions of Leibniz algebras with corresponding the Diamond Lie algebra $\mathfrak{D}$ and by identifying ideal $I$ with $\mathfrak{D}$-modules discussed in previous section.
 Using computer program in Mathematica 10, we describe linear integrable deformations for obtained finite-dimensional Leibniz algebras.

\subsection{Leibniz algebras with the ideal $I$ as $\mathfrak{D}_{\mathbb{C}}$-modules by restriction of $\mathfrak{sl}(3,\mathbb{C})$.}

In this subsection we are going to describe Leibniz algebras $L$ such that $L/I \cong \mathfrak{D}_{\mathbb{C}}$ and the ideal $I$ is identified as a right $\mathfrak{D}_{\mathbb{C}}$-modules by restriction of $\mathfrak{sl}(3,\mathbb{C})$.

For the shortness, instead Leibniz identity $[X,[Y,Z]]=[[X,Y],Z]-[[X,Z],Y]$ we will use below the notation $\{X,Y,Z\}$.

\begin{lemma}\label{12} Let $L$ be a Leibniz algebra such that $L/I\cong\mathfrak{\overline{D}}_{\mathbb{C}}$, where $\mathfrak{\overline{D}}_{\mathbb{C}}$ is the Diamond Lie algebra and $I$ is its right $\mathfrak{\overline{D}}_{\mathbb{C}}$-module.
If there  exists a basis $\{X_1,X_2,\dots,X_n\}$ of $I$ such that $[X_i,J]=\alpha_iX_i, \ \alpha_i {\notin}
\{-2,0,2\}$  for $1\leq i\leq n,$ then
$$[\mathfrak{D},\mathfrak{D}]\subseteq\mathfrak{D}.$$
\end{lemma}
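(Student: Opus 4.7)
The plan is to show that, by a sequence of basis changes of the chosen lifts of $J, P_1, P_2, T$ in $L$, every bracket of these elements ends up inside $\mathfrak{D}:=\mathrm{span}\{J,P_1,P_2,T\}$. I first write each bracket as a ``Lie part plus $I$-valued correction'', e.g.
$$[J,J]=j_0, \ [J,P_1]=P_1+a, \ [P_1,J]=-P_1+b, \ [P_1,P_1]=p_0, \ [P_1,P_2]=T+e, \ [P_1,T]=\mu_1, \ [T,P_1]=\nu_1,$$
and similarly for the remaining pairs, with all Greek letters denoting elements of $I$; the goal is to kill every correction.

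The central tool is the observation that $R_J(x):=[x,J]$ is a derivation of $L$, which follows immediately from the Leibniz identity rewritten as $[[x,y],J]=[x,[y,J]]+[[x,J],y]$. Because $I\subseteq Ann_r(L)$, the operator $R_J$ is independent of the chosen lift of $J$; by assumption $R_J|_I$ is diagonal with spectrum $\{\alpha_i\}\subseteq\mathbb{C}\setminus\{-2,0,2\}$, and consequently $R_J$, $R_J+2$, and $R_J-2$ are all invertible on $I$.

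I then run a cascade of normalizations. Invertibility of $R_J|_I$ lets me replace $J$ by $J-R_J^{-1}(j_0)$ to achieve $[J,J]=0$; the shifts $P_1\mapsto P_1+a$ and $P_2\mapsto P_2-c$ then yield $[J,P_1]=P_1$ and $[J,P_2]=-P_2$, and the Leibniz triples $\{J,J,P_i\}$ automatically produce $[P_1,J]=-P_1$ and $[P_2,J]=P_2$. Applying $\{P_1,J,P_1\}$ and $\{P_2,J,P_2\}$ in this simplified basis yields $(R_J+2)p_0=0$ and $(R_J-2)q_0=0$, forcing $p_0=q_0=0$; the triple $\{J,J,T\}$ yields $R_J(\tau)=0$, hence $[J,T]=0$. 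A further shift $T\mapsto T+e$ then enforces $[P_1,P_2]=T$, after which $\{P_1,J,P_2\}$ gives $[T,J]=0$ and $\{T,T,J\}$ gives $[T,T]=0$. Finally, $\{J,P_1,P_2\}$ gives $[P_2,P_1]=-T$; the triples $\{J,P_i,T\}$ give $[P_i,T]=0$; and $\{P_1,P_1,P_2\}$, resp.\ $\{P_2,P_1,P_2\}$, combined with $[P_i,T]=0$, give $[T,P_i]=0$.

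The main technical obstacle is orchestrating the normalizations in the correct order, so that each invoked Leibniz identity operates in a basis where the required prior simplifications are already in place. Conceptually, the excluded eigenvalues $\{-2,0,2\}$ are precisely the $R_J$-weights of the ``square'' and ``weight-zero'' obstructions $p_0, q_0, j_0, t_0, \tau$: these are the corrections that cannot be absorbed by any linear shift of the generators and must therefore vanish outright by the diagonal structure of $R_J|_I$.
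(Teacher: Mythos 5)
Your proposal is correct and follows essentially the same route as the paper: normalize $[J,J]$, $[J,P_1]$, $[J,P_2]$, $[P_1,P_2]$ by shifting the lifts (using $\alpha_i\neq 0$), then apply the Leibniz identity to the same family of triples, with $\alpha_i\neq\pm2$ killing $[P_1,P_1]$ and $[P_2,P_2]$ exactly as in the paper's use of $\{P_1,J,P_1\}$ and $\{P_2,J,P_2\}$. Your packaging of the eigenvalue hypotheses as invertibility of $R_J$, $R_J\pm2$ on $I$ (with $R_J$ a derivation independent of the lift of $J$) is a cleaner way to state what the paper does coordinatewise, but it is not a different argument.
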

\begin{proof} Here we shall use the table of multiplication (\ref{1.3}) of the complex Diamond Lie algebra.
Let us assume that $[J,J]=\sum\limits_{i=1}^nm_iX_i.$ Then by setting $J':=J-\sum\limits_{i=1}^n\frac{m_i}{\alpha_i}X_i,$ we can assume that $[J,J]=0$.

Let us denote
$$[J,P_1]=P_1+\sum\limits_{i=1}^nq_iX_i, \quad [J,P_2]=-P_2+\sum\limits_{i=1}^nr_iX_i.$$

Taking the following basis transformation:
$$ J'=J, \quad P_1'=P_1+\sum\limits_{i=1}^nq_iX_i, \quad P_2'=P_2-\sum\limits_{i=1}^nr_iX_i, \quad T'=[P_1',P_2'], $$
we can assume that
$$[J,P_1]=P_1, \quad [J,P_2]=-P_2, \quad [P_1,P_2]=T.$$

Applying the  Leibniz identity for the triples $\{J,J,P_1\}, \ \{J,J,P_2\}$ we derive
$$[P_1,J]=-[J,P_1], \quad [P_2,J]=-[J,P_2].$$

We put $$[J,T]=\sum_{i=1}^{n}t_iX_i.$$

Considering the Leibniz identity for the triple $\{J,J,T\}$ and taking into account the condition $\alpha_i\neq0,$ we get $[J,T]=0.$

Similarly, from the  Leibniz identity for the following triples we obtain:

$$\left\{\begin{array}{lll}
\{J,P_1,P_2\}, & \Rightarrow & [P_2,P_1]=-[P_1,P_2], \\[1mm]
\{J,P_1,T\}, & \Rightarrow & [P_1,T]=0, \\[1mm]
\{J,P_2,T\}, & \Rightarrow & [P_2,T]=0, \\[1mm]
\{P_1,P_2,T\}, & \Rightarrow & [T,T]=0,  \\[1mm]
\{P_1,J,P_2\}, & \Rightarrow & [T,J]=0. \\[1mm]
\end{array}\right.$$

Taking into account the condition of proposition $\alpha_i\notin\{-2,0,2\}$ in the Leibniz identity for the triples:
 $$ \{P_1,J,P_1\}, \ \{P_2,J,P_2\}, \ \{P_1,P_1,P_2\}, \ \{P_2,P_2,P_1\}$$  we obtain
  the products $$[P_1,P_1]=[P_2,P_2]=[T,P_1]=[T,P_2]=0,$$ which complete the proof of the lemma.
\end{proof}

In the following theorems we consider the case when the ideal $I$ of the algebra $L$ is defined by right $\mathfrak{\overline{D}}_{\mathbb{C}}$-modules (\ref{sl3module1}) and (\ref{sl3module2}), respectively.

\begin{teo} An arbitrary Leibniz algebra with corresponding Lie algebra $\mathfrak{\overline{D}}_{\mathbb{C}}$ and $I$ associated with $\mathfrak{\overline{D}}_{\mathbb{C}}$-module defined by (\ref{sl3module1}) admits a basis $\{ J,P_+,P_-,T,X_1, X_2,X_3\}$ such that the table of multiplication of an algebra has the following form:
$$L_1: \quad \left\{\begin{array}{lll}
[J,P_+]=iP_+,   & [J,P_-]=-iP_-, & [P_+,P_-]=2iT,   \\[1mm]
[P_+,J]=-iP_+,  & [P_-,J]=iP_-, & [P_-,P_+]=-2iT, \\[1mm]
[X_1,J]=\frac{2}{3}iX_1,    & [X_2,J]=-\frac{1}{3}iX_2, & [X_3,J]=-\frac{1}{3}iX_3,   \\[1mm]
[X_1,P_+]=X_2,  & [X_3,P_-]=X_1, & [X_3,T]=\frac{i}{2}X_2.
\end{array}\right.$$
\end{teo}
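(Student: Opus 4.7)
The plan is to apply Lemma \ref{12} after a rescaling that converts the presentation (\ref{1.2}) into the presentation (\ref{1.3}) for which Lemma \ref{12} is stated, and then to translate the resulting structure back, reading off the module piece directly from (\ref{sl3module1}).

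First I would introduce the rescaled basis $J' = -iJ$, $P_1' = P_+$, $P_2' = P_-$, $T' = 2iT$ already appearing in the preliminaries, so that the Diamond Lie algebra relations take the form (\ref{1.3}). The induced right module action on $I$ then has eigenvalues $(X_1, J') = -i(X_1,J) = \tfrac{2}{3}X_1$, $(X_2,J') = -\tfrac{1}{3}X_2$ and $(X_3,J') = -\tfrac{1}{3}X_3$, so the eigenvalues $\{\tfrac{2}{3}, -\tfrac{1}{3}, -\tfrac{1}{3}\}$ avoid the forbidden set $\{-2,0,2\}$ and Lemma \ref{12} applies. This is the step that motivates the rescaling: the original eigenvalues $\{\tfrac{2i}{3}, -\tfrac{i}{3}, -\tfrac{i}{3}\}$ are the correct spectrum for (\ref{1.2}), but only after multiplying by $-i$ do they fit the real form of the hypothesis used in Lemma \ref{12}.

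Lemma \ref{12} then supplies a basis $\{J', P_1', P_2', T'\}$ of a vector-space complement of $I$ in $L$ on which the full multiplication table coincides with the Diamond Lie algebra in the convention (\ref{1.3}), together with the opposite brackets $[P_1',J'] = -P_1'$, $[P_2',J'] = P_2'$, $[P_2',P_1'] = -T'$ and the vanishing of all remaining products among these four generators. The successive basis adjustments in the proof of Lemma \ref{12} (of the shape $J' \mapsto J' - \sum (m_i/\alpha_i) X_i$ and $P_1' \mapsto P_1' + \sum q_i X_i$) alter the representatives only by elements of $I$; since $I \subseteq \mathrm{Ann}_r(L)$, the right action of $\overline{L}$ on $I$ is unaffected and is still given by (\ref{sl3module1}).

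Finally, I would undo the rescaling by $J = iJ'$ and $T = T'/(2i)$, so that linearity of the bracket yields $[J,P_+] = iP_+$, $[J,P_-] = -iP_-$, $[P_+,P_-] = 2iT$, together with the opposite products $[P_+,J] = -iP_+$, $[P_-,J] = iP_-$, $[P_-,P_+] = -2iT$. The products $[Y,X_i]$ and $[X_i,X_j]$ all vanish because $I \subseteq \mathrm{Ann}_r(L)$, and the remaining nonzero products $[X_i,Y]$ for $Y \in \{J,P_+,P_-,T\}$ are precisely the entries recorded in (\ref{sl3module1}). This reproduces the table of $L_1$. I expect the only nontrivial checkpoint to be the verification that the basis adjustments in Lemma \ref{12} do not disturb the module eigenvectors after rescaling; everything else is a direct transcription from Lemma \ref{12} and (\ref{sl3module1}).
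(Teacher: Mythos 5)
Your proposal is correct and follows essentially the same route as the paper, whose entire proof is the single line ``The proof is following from Lemma \ref{12}''; you simply make explicit the rescaling between the conventions (\ref{1.2}) and (\ref{1.3}), verify that the eigenvalues $\tfrac{2}{3},-\tfrac{1}{3},-\tfrac{1}{3}$ avoid $\{-2,0,2\}$, and note that the lemma's proof (not merely its stated conclusion) yields the full multiplication table on $\{J,P_+,P_-,T\}$, with the module piece read off from (\ref{sl3module1}) and the remaining products killed by $I\subseteq Ann_r(L)$. All of these checks are sound, so nothing further is needed.
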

\begin{proof} The proof is following from Lemma \ref{12}.
\end{proof}

\begin{teo} An arbitrary Leibniz algebra with corresponding Lie algebra $\mathfrak{\overline{D}}_{\mathbb{C}}$ and $I$ associated with $\mathfrak{\overline{D}}_{\mathbb{C}}$-module defined by (\ref{sl3module2}) admits a basis $\{ J,P_+,P_-,T,X_1, X_2,X_3\}$ of $L_1$ such that the table of multiplication of an algebra has the following form:
$$L_2: \quad \left\{\begin{array}{lll}
[J,P_+]=iP_+,   & [J,P_-]=-iP_-, & [P_+,P_-]=2iT,   \\[1mm]
[P_+,J]=-iP_+,  & [P_-,J]=iP_-, & [P_-,P_+]=-2iT, \\[1mm]
[X_1,J]=\frac{1}{3}iX_1,    & [X_2,J]=-\frac{2}{3}iX_2, & [X_3,J]=\frac{1}{3}iX_3,    \\[1mm]
[X_1,P_+]=X_2, & [X_2,P_-]=X_3, & [X_1,T]=-\frac{i}{2}X_3.
\end{array}\right.$$
\end{teo}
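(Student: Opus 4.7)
The plan is to reduce directly to Lemma~\ref{12}, exactly as in the proof of Theorem~1. I would first pass to the rescaled basis of (\ref{1.3}) via $J' = -iJ$, $P_1' = P_+$, $P_2' = P_-$, $T' = 2iT$, in which the lemma is literally stated. Under this rescaling the right action of $J'$ on $I$ remains diagonal in $\{X_1,X_2,X_3\}$, and a short computation from (\ref{sl3module2}) shows the new eigenvalues to be $\tfrac{1}{3},\,-\tfrac{2}{3},\,\tfrac{1}{3}$. None of these lies in the forbidden set $\{-2,0,2\}$ of Lemma~\ref{12}, so the lemma applies.

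Applying the lemma furnishes a basis in which the brackets among $\{J',P_1',P_2',T'\}$ close within the $\mathfrak{D}$-part of $L$ and reproduce the Lie relations of (\ref{1.3}) together with their antisymmetric shadows $[P_j',J'] = -[J',P_j']$ and $[P_2',P_1']=-[P_1',P_2']$; all remaining brackets inside the $\mathfrak{D}$-part (namely $[J',J']$, $[J',T']$, $[T',J']$, $[P_j',T']$, $[T',P_j']$, $[P_j',P_j']$, $[T',T']$) vanish. Undoing the rescaling to return to $\{J,P_+,P_-,T\}$ then produces the first two rows of the multiplication table of $L_2$, with the coefficients $\pm i$ and $\pm 2i$ arising precisely from the rescaling factors $-i$ and $2i$.

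To complete the table I would invoke the right-annihilator property $I\subseteq Ann_r(L)$ recalled in the introduction, which forces $[Y,X_i]=0$ for every $Y\in L$; in particular all products $[X_i,X_j]$ and $[Y,X_i]$ with $Y$ in the $\mathfrak{D}$-part vanish. The remaining products $[X_i,Y]$ for $Y\in\mathfrak{D}$ are, by the very definition of the right $\mathfrak{\overline{D}}_{\mathbb{C}}$-module structure on $I$, transcribed directly from the table (\ref{sl3module2}). Assembling the three pieces yields precisely the displayed multiplication table of $L_2$.

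I do not anticipate any genuine obstacle: this is a near-verbatim copy of the proof of Theorem~1 with the module (\ref{sl3module1}) replaced by (\ref{sl3module2}), and the only mildly tedious step is the rescaling that bridges the complex basis (\ref{1.2}) of the theorem with the integer-eigenvalue basis (\ref{1.3}) of Lemma~\ref{12}. That step is entirely mechanical.
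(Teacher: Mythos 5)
Your proposal is correct and follows exactly the paper's own route: the paper's entire proof of this theorem is the single sentence ``The proof is following from Lemma~\ref{12}'', and your write-up simply supplies the details it leaves implicit (the rescaling to the basis of (\ref{1.3}), the check that the eigenvalues $\tfrac{1}{3},-\tfrac{2}{3},\tfrac{1}{3}$ avoid $\{-2,0,2\}$, and the use of $I\subseteq Ann_r(L)$ together with the module table (\ref{sl3module2}) to fill in the remaining products).
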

\begin{proof}The proof is following from Lemma \ref{12}.\end{proof}

\subsection{Leibniz algebras with the ideal $I$ as $\overline{\mathfrak{D}}_{\mathbb{R}}$-modules by restriction of  $\mathfrak{sp}(4,\mathbb{R})$.}

In this subsection we shall describe real Leibniz algebras $L$ such that $L/ I \cong  \overline{\mathfrak{D}}_{\mathbb{R}}$ and the ideal $I$ is a right faithful representation $\overline{\mathfrak{D}}_{\mathbb{R}}$ in $sp(4, \mathbb{R})$, defined by right module (\ref{sp4R}). Let $\{ J,P_1,P_2,T,X_1, X_2,X_3,X_4\}$ be a basis of $L.$

We set
\begin{equation}\label{eq11}\left\{\begin{array}{ll}
[J,P_1]=P_2+\sum\limits_{i=1}^4a_iX_i,  &[P_1,J]=-P_2+\sum\limits_{i=1}^4d_iX_i, \\[1mm]
[J,P_2]=-P_1+\sum\limits_{i=1}^4b_iX_i, &[P_2,J]=P_1+\sum\limits_{i=1}^4k_iX_i, \\[1mm]
[P_1,P_2]=T+\sum\limits_{i=1}^4c_iX_i,  & [P_2,P_1]=-T+\sum\limits_{i=1}^4l_iX_i, \\[1mm]
[J,J]=\sum\limits_{i=1}^4m_iX_i, & [P_1,P_1]=\sum\limits_{i=1}^4n_iX_i, \\[1mm]
[P_2,P_2]=\sum\limits_{i=1}^4p_iX_i, & [T,T]=\sum\limits_{i=1}^4q_iX_i, \\[1mm]
[J,T]=\sum\limits_{i=1}^4r_iX_i, & [T,J]=\sum\limits_{i=1}^4s_iX_i, \\[1mm]
[P_1,T]=\sum\limits_{i=1}^4t_iX_i, & [T,P_1]=\sum\limits_{i=1}^4u_iX_i, \\[1mm]
[P_2,T]=\sum\limits_{i=1}^4v_iX_i, & [T,P_2]=\sum\limits_{i=1}^4w_iX_i.
\end{array}\right.\end{equation}

In the following lemma we describe the table of multiplications of the Leibniz algebras under above conditions.
\begin{lemma}\label{lemma2} There exists a basis $\{ J,P_1,P_2,T,X_1, X_2,X_3,X_4\}$ of $L$ such that the table of multiplication has the following form:
$$L(\alpha_1,\alpha_2)=\left\{\begin{array}{ll}
[J,P_1]=P_2,    &[P_1,J]=-P_2, \\[1mm]
[J,P_2]=-P_1,   &[P_2,J]=P_1, \\[1mm]
[P_1,P_2]=T,    & [P_2,P_1]=-T, \\[1mm]
[J,J]=\alpha_1X_4, & [P_1,P_1]=\alpha_2X_1, \\[1mm]
[P_2,P_2]=\alpha_2X_1, & [J,T]=2\alpha_2X_1,\\[1mm]
[P_1,T]=-2\alpha_2X_3, & [T,P_1]=3\alpha_2X_3, \\[1mm]
[P_2,T]=2\alpha_2X_2, & [T,P_2]=-3\alpha_2X_2, \\[1mm]
[X_1,P_1]=X_2,  & [X_1,P_2]=X_3,   \\[1mm]
[X_1,T]=2X_4,   & [X_2,J]=-X_3,       \\[1mm]
[X_2,P_2]=X_4,  & [X_3,J]=X_2, \\[1mm]
[X_3,P_1]=-X_4, &
\end{array}\right.$$
where parameters $\alpha_1, \ \alpha_2 \in \mathbb{C}$.
\end{lemma}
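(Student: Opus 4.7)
The plan is to start from the fully general expansion (\ref{eq11}) and combine two sources of information: basis changes of the form $J'=J+\xi_0$, $P_1'=P_1+\xi_1$, $P_2'=P_2+\xi_2$, $T'=[P_1',P_2']$ with $\xi_0,\xi_1,\xi_2\in I$, which are the only redundancies that survive once the projection $L\twoheadrightarrow \overline{\mathfrak{D}}_{\mathbb{R}}$ is fixed; and Leibniz identities $\{X,Y,Z\}$ applied to triples of basis elements. The target is to reduce all eighteen coefficient vectors in (\ref{eq11}) to the two-parameter family displayed in the statement, identifying $\alpha_1$ with the $X_4$-component of $[J,J]$ and $\alpha_2$ with the $X_1$-component of $[P_1,P_1]$.

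First I would exploit the eigenstructure of $\mathrm{ad}_J|_I$. Since $X_2\cdot J=-X_3$, $X_3\cdot J=X_2$ and $X_1\cdot J=X_4\cdot J=0$, the operator $\mathrm{ad}_J$ on $I$ has eigenvalues $\{0,i,-i,0\}$, with $X_2,X_3$ spanning the $\pm i$-eigenspaces after complexification. A suitable shift $J\mapsto J+\xi_0$ therefore absorbs the $X_2,X_3$-components of $[J,J]$, and shifts of $P_1,P_2$ absorb the $X_2,X_3$-components of $[J,P_1]$ and $[J,P_2]$. A first round of Leibniz identities on the triples $\{J,J,P_1\}$, $\{J,J,P_2\}$, $\{J,J,T\}$ then forces $[P_i,J]=-[J,P_i]$ modulo controlled $I$-terms, and a second wave of shifts, now in the $X_1,X_4$ directions, cleans the remaining residues so that $[J,P_1]=P_2$, $[J,P_2]=-P_1$ and $[J,T]$ is proportional to $X_1$.

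Next I would run Leibniz identities on $\{P_1,J,P_1\}$, $\{P_2,J,P_2\}$, $\{P_1,P_1,P_2\}$, $\{P_2,P_2,P_1\}$, $\{P_1,J,P_2\}$, $\{J,P_1,T\}$, $\{J,P_2,T\}$, $\{P_1,P_2,T\}$ together with $\{T,T,J\}$, $\{T,T,P_1\}$, $\{T,T,P_2\}$. The first two use the nontrivial $J$-action on $X_2,X_3$ to kill the $X_2,X_3$-parts of $[P_1,P_1]$ and $[P_2,P_2]$; together with $\{P_1,J,P_2\}$ they force the $X_1$-coefficients of these squares to coincide, defining the parameter $\alpha_2$. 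The $T$-triples then pin $[P_i,T]$, $[T,P_i]$ and $[T,T]$ to the stated multiples of $\alpha_2$ (and to $0$, respectively), while $\{J,P_i,T\}$ fixes the $2\alpha_2 X_1$ in $[J,T]$. The surviving $X_4$-coefficient of $[J,J]$ is not moved by any of the remaining substitutions, since $X_4$ lies in the kernel of every right action of $\mathfrak{D}_{\mathbb{R}}$, and becomes the second parameter $\alpha_1$.

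The main obstacle will be bookkeeping. The eighteen coefficient vectors in (\ref{eq11}) carry seventy-two unknowns constrained by Leibniz identities on some two dozen triples; many of these identities are redundant, and the challenge is to apply them in an order that consumes each available basis transformation exactly once without reintroducing previously eliminated terms. A secondary subtlety is that $I$ is indecomposable as an $\overline{\mathfrak{D}}_{\mathbb{R}}$-module, with a one-dimensional trivial submodule spanned by $X_4$ and a one-dimensional trivial quotient generated by the class of $X_1$, so the $X_1$- and $X_4$-components behave very differently from the $X_2,X_3$-components: they are genuinely invariant along the $J$-direction, and this is precisely why the two parameters $\alpha_1,\alpha_2$ cannot be killed.
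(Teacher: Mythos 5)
Your plan is essentially the paper's own proof: after writing out the general products \eqref{eq11}, one shifts $J,P_1,P_2$ by elements of $I$ (and sets $T=[P_1,P_2]$) to normalize $[J,J]$, $[J,P_1]$, $[J,P_2]$, $[P_1,P_2]$ using exactly the structure of right multiplication on $I$ that you describe, and then runs Leibniz identities over triples from $\{J,P_1,P_2,T\}$ until only the $X_4$-component of $[J,J]$ and the common $X_1$-component of $[P_1,P_1]$, $[P_2,P_2]$ and $\tfrac12[J,T]$ survive. The one point to watch is that your closing heuristic (invariance of the $X_1$- and $X_4$-components under $J$-shifts) would equally predict a surviving $X_1$-component in $[J,J]$ and surviving $X_4$-components in $[P_1,P_1]$ and $[P_2,P_2]$; in the paper the former is killed by the identity on the triple $\{J,T,J\}$ via $[X_1,T]=2X_4$, and the latter by a final change $J\mapsto J-\alpha_3X_1$, $P_1\mapsto P_1+\alpha_3X_3$, $P_2\mapsto P_2-\alpha_3X_2$, so your list of triples must be enlarged (the paper also needs, e.g., $\{P_2,J,P_1\}$, $\{J,P_1,P_2\}$, $\{P_1,P_1,T\}$, $\{P_2,P_1,P_2\}$) to force these coefficients rather than relying on the module structure alone.
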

\begin{proof} Let Leibniz algebra $L$ has the products \eqref{eq11}. Then taking change of basis elements as follows $$\begin{array}{ll}
J'=J+m_3X_2-m_2X_3, & P_1' = P_1-\sum\limits_{i=1}^3b_iX_i-(b_4+m_3)X_4,\\[3mm]
P_2' = P_2+\sum\limits_{i=1}^3a_iX_i+(a_4+m_2)X_4, & T' = T+\sum\limits_{i=1}^2c_iX_i+(c_3-b_1)X_3+(c_4-b_2)X_4,
\end{array}$$ we can assume that
$$[J,P_1]=P_2,\quad [J,P_2]=-P_1, \quad [P_1,P_2]=T, \quad [J,J]=m_1X_1+m_4X_4.$$

From the Leibniz identity for the triples below we have
$$\left\{\begin{array}{lll}
\{J,T,J\}, & \Rightarrow & r_2=r_3=m_1=0, \\[1mm]
\{J,P_1,J\}, & \Rightarrow & [P_2,J]=P_1, \\[1mm]
\{J,P_2,J\}, & \Rightarrow & [P_1,J]=-P_2, \\[1mm]
\{J,P_1,T\}, & \Rightarrow & v_1=v_3=v_4=0, v_2=r_1, \\[1mm]
\{J,P_2,T\}, & \Rightarrow & t_1=t_2=t_4=0, t_3=-r_1, \\[1mm]
\{P_1,T,P_2\}, & \Rightarrow & q_1=q_2=q_3=q_4=0, \\[1mm]
\{P_1,P_1,T\}, & \Rightarrow & n_1=\frac{1}{2}r_1, \\[1mm]
\{P_2,P_2,T\}, & \Rightarrow & p_1=\frac{1}{2}r_1,  \\[1mm]
\{J,P_1,P_2\}, & \Rightarrow & p_2=-n_2, p_3=-n_3, p_4=r_4-n_4, \\[1mm]
\{P_1,J,P_2\}, & \Rightarrow & s_1=0, s_2=2n_2, s_3=2n_3, s_4=2n_4-r_4, \\[1mm]
\{P_1,P_1,P_2\}, & \Rightarrow & u_1=u_2=0, u_3=\frac{3}{2}r_1, u_4=n_2, \\[1mm]
\{P_2,J,P_1\}, & \Rightarrow & n_2=\frac{1}{4}l_3, n_3=-\frac{1}{4}l_2, n_4=\frac{1}{2}r_4, \\[1mm]
\{P_1,J,P_1\}, & \Rightarrow & l_1=l_2=l_3=l_4=0, \\[1mm]
\{P_2,P_1,P_2\}, & \Rightarrow & w_1=w_3=w_4=0, w_2=-\frac{3}{2}r_1.
\end{array}\right.$$

Summarizing all obtained restrictions and setting $\alpha_1:=m_4, \alpha_2:=\frac{1}{2}r_1, \alpha_3:=\frac{1}{2}r_4,$ we have the family of algebras
$$\left\{\begin{array}{ll}
[J,P_1]=P_2,    &[P_1,J]=-P_2, \\[1mm]
[J,P_2]=-P_1,   &[P_2,J]=P_1, \\[1mm]
[P_1,P_2]=T,    & [P_2,P_1]=-T, \\[1mm]
[J,J]=\alpha_1X_4, & [P_1,P_1]=\alpha_2X_1+\alpha_3X_4, \\[1mm]
[P_2,P_2]=\alpha_2X_1+\alpha_3X_4, & [J,T]=2\alpha_2X_1+2\alpha_3X_4,\\[1mm]
[P_1,T]=-2\alpha_2X_3, & [T,P_1]=3\alpha_2X_3, \\[1mm]
[P_2,T]=2\alpha_2X_2, & [T,P_2]=-3\alpha_2X_2, \\[1mm]
[X_1,P_1]=X_2,  & [X_1,P_2]=X_3,   \\[1mm]
[X_1,T]=2X_4,   & [X_2,J]=-X_3,       \\[1mm]
[X_2,P_2]=X_4,  & [X_3,J]=X_2, \\[1mm]
[X_3,P_1]=-X_4. &
\end{array}\right.$$

Finally making the change of basis elements
$$J'=J-\alpha_3X_1, \quad P_1'=P_1+\alpha_3X_3, \quad P_2'=P_2-\alpha_3X_2,$$
we get the family $L(\alpha_1,\alpha_2).$
\end{proof}

\begin{teo}\label{theorem5} An arbitrary Leibniz algebra of the family $L(\alpha_1,\alpha_2)$ is isomorphic to one of the following pairwise non-isomorphic algebras:
$$L(1,0), \quad L(1,1), \quad L(-1,1), \quad L(0,0), \quad L(0,1).$$
\end{teo}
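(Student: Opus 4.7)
The plan is to determine the orbits of $(\alpha_1,\alpha_2)\in\mathbb{R}^2$ under the group of Leibniz algebra isomorphisms of the family. Since the ideal $I$ generated by squares is characteristic, any isomorphism $\phi\colon L(\alpha_1,\alpha_2)\to L(\alpha_1',\alpha_2')$ induces a Lie automorphism $\bar\phi$ of $\overline{\mathfrak{D}}_{\mathbb{R}}\cong L/I$ together with a compatible right-module isomorphism on $I$.

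First I would describe the relevant automorphism group. Using that $T$ spans the centre of the Heisenberg derived ideal and that $[J,\cdot]$ acts on $\langle P_1,P_2\rangle$ as a rotation, modulo inner automorphisms and modulo $I$-valued shifts of $J,P_1,P_2,T$ (which can be absorbed by the same type of base change used in the proof of Lemma \ref{lemma2}), $\bar\phi$ takes the form
\[
\bar\phi(J)=J,\quad \bar\phi(P_1)=aP_1+bP_2,\quad \bar\phi(P_2)=-bP_1+aP_2,\quad \bar\phi(T)=(a^2+b^2)T,
\]
with $(a,b)\in\mathbb{R}^2\setminus\{0\}$. Since $V=I$ is cyclic with generator $X_1$ as a right $\overline{\mathfrak{D}}_{\mathbb{R}}$-module via \eqref{sp4R}, the compatible module part is essentially determined by $\phi(X_1)=\xi X_1$ for some $\xi\in\mathbb{R}^\times$ (plus an inessential shift by $X_4$), after which the images of $X_2,X_3,X_4$ are forced by the module relations.

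Next I would plug these formulas into the two residual "diagonal-square" relations $[J,J]=\alpha_1X_4$ and $[P_1,P_1]=\alpha_2X_1$. A direct computation yields the transformation law
\[
\alpha_1'=\frac{\alpha_1}{\xi(a^2+b^2)},\qquad \alpha_2'=\frac{(a^2+b^2)\,\alpha_2}{\xi}.
\]
Writing $\mu:=a^2+b^2>0$ and noting that $\mu>0$ and $\xi\in\mathbb{R}^\times$ can be chosen independently, the vanishing of $\alpha_1$ and of $\alpha_2$ are separate orbit invariants; when both are nonzero, the identity $\alpha_1'\alpha_2'=\alpha_1\alpha_2/\xi^2$ shows that $\operatorname{sgn}(\alpha_1\alpha_2)$ is also an invariant.

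Finally I would enumerate the orbits: if $\alpha_2\neq 0$, set $\xi=\mu\alpha_2$ to force $\alpha_2'=1$ and $\alpha_1'=\alpha_1/(\mu^2\alpha_2)$, whose modulus can be set to $1$ by choice of $\mu>0$, yielding $L(0,1)$, $L(1,1)$, or $L(-1,1)$ according to the sign of $\alpha_1\alpha_2$; and if $\alpha_2=0$, a free choice of $\xi\mu\in\mathbb{R}^\times$ normalises $\alpha_1$ to $0$ or $1$, giving $L(0,0)$ or $L(1,0)$. The main obstacle will be the bookkeeping verification that all $I$-valued shifts of $\phi(J),\phi(P_i),\phi(T)$ can indeed be absorbed by base changes analogous to those in Lemma \ref{lemma2}, and ruling out any more exotic outer automorphisms of $\overline{\mathfrak{D}}_{\mathbb{R}}$ (such as one sending $J\mapsto -J$) as sources of further identifications—a check whose outcome depends essentially on the ground field being $\mathbb{R}$ rather than $\mathbb{C}$, since this is precisely what separates $L(1,1)$ from $L(-1,1)$.
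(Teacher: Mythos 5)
Your outline follows essentially the same route as the paper: reduce to a basis change on the generators $J,P_1,X_1$ (absorbing the $I$-valued shifts as in the lemma), and extract the transformation laws, which in the paper's notation read $\alpha_2'=(B_2^2+B_3^2)\alpha_2/C_1$ and $\alpha_1'=\alpha_1/\bigl((B_2^2+B_3^2)C_1\bigr)$ and coincide with your $\alpha_2'=\mu\alpha_2/\xi$, $\alpha_1'=\alpha_1/(\mu\xi)$; the concluding case analysis is also identical. The problem is the step you yourself single out as ``the main obstacle'' and then leave undone: controlling the orientation-reversing automorphisms of $\overline{\mathfrak{D}}_{\mathbb{R}}$ with $\bar\phi(J)=-J$. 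These exist (conjugating $\operatorname{ad}J$, which acts as a quarter-turn on $\langle P_1,P_2\rangle$, by a reflection inverts it, and then $T\mapsto\det(\bar\phi|_{\langle P_1,P_2\rangle})\,T$), they are compatible with the module (\ref{sp4R}), and they lift to the Leibniz algebras. Concretely, the linear map
\[
J\mapsto -J,\quad P_1\mapsto P_1,\quad P_2\mapsto -P_2,\quad T\mapsto -T,\quad X_1\mapsto X_1,\quad X_2\mapsto X_2,\quad X_3\mapsto -X_3,\quad X_4\mapsto -X_4
\]
preserves every product in the table of $L(\alpha_1,\alpha_2)$ except that $[J,J]=\alpha_1X_4$ becomes $-\alpha_1$ times the new $X_4$, so it is an isomorphism $L(\alpha_1,\alpha_2)\to L(-\alpha_1,\alpha_2)$.

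Consequently $\operatorname{sgn}(\alpha_1\alpha_2)$ is \emph{not} an invariant of the full isomorphism group, only of the index-two subgroup of transformations you (and the paper, which asserts $A_1=1$ where the relation $[P_2',J']=P_1'$ only forces $A_1^2=1$) actually consider; your proposed proof therefore cannot separate $L(1,1)$ from $L(-1,1)$, and indeed these two algebras are isomorphic, so the deferred check resolves in the direction opposite to the one you (and the theorem) anticipate. Your remark that the separation ``depends essentially on the ground field being $\mathbb{R}$'' is a red herring here: the identification above is already defined over $\mathbb{R}$. To make the argument correct you would have to enlarge your normal form for $\bar\phi$ by the reflection family $\bar\phi(P_1)=aP_1+bP_2$, $\bar\phi(P_2)=bP_1-aP_2$, $\bar\phi(J)=-J$, $\bar\phi(T)=-(a^2+b^2)T$, rederive the action on $(\alpha_1,\alpha_2)$ (which gains a sign on $\alpha_1$), and conclude with a shorter list of representatives. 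The rest of your computation (the vanishing of $\alpha_1$ and of $\alpha_2$ being separate invariants, and the normalizations in the two cases) is sound and matches the paper.
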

\begin{proof} In order to achieve our goal we shall consider isomorphism (basis transformation) inside the family
$L(\alpha_1,\alpha_2).$ Note that element $J, P_1, X_1$ generate the algebra. Therefore, we take the general transformation of these basis elements:
$$\begin{array}{ll}
J'=A_1J+A_2P_1+A_3P_2+A_4T+\sum\limits_{i=1}^4A_{i+4}X_i,\\[3mm]
P_1'=B_1J+B_2P_1+B_3P_2+B_4T+\sum\limits_{i=1}^4B_{i+4}X_i,\\[3mm]
X_1'=\sum\limits_{i=1}^4C_{i}X_i,
\end{array}$$
with $(A_1B_2-A_2B_1)C_1\neq0$.

Let us generate the rest basis elements $P_2', T', X_2', X_3', X_4':$

$P_2'=[J',P_1']=(A_3 B_1 - A_1
B_3) P_1 + (-A_2 B_1 + A_1 B_2) P_2 + (-A_3 B_2 + A_2 B_3) T +
(A_2 B_2 + A_3 B_3 + 2 A_1 B_4) \alpha_2 X_1 + (A_7 B_1 + A_5 B_2
+ (-3 A_4 B_3 + 2 A_3 B_4) \alpha_2) X_2 + (-A_6 B_1 + A_5 B_3 +
(3 A_4 B_2 - 2 A_2 B_4) \alpha_2) X_3 + (-A_7 B_2 + A_6 B_3 + 2
A_5 B_4 + A_1 B_1 \alpha_1) X_4,$\\

$T'=[P_1',P_2']=B_1 (A_2 B_1 - A_1 B_2) P_1 +  B_1 (A_3 B_1 - A_1
B_3) P_2 + (-A_2 B_1 B_2 + A_1 B_2^2 - A_3 B_1 B_3+A_1 B_3^2) T
-B_1 (A_3 B_2 - A_2 B_3) \alpha_2 X_1 + (A_3B_1 B_5 - A_1 B_3 B_5
- 2 A_3 B_2 B_3 \alpha_2 + 2 A_2 B_3^2 \alpha_2 + 3 A_2 B_1 B_4
\alpha_2 - 3 A_1 B_2 B_4 \alpha_2) X_2 + (-A_2 B_1 B_5 + A_1 B_2
B_5 + 2 A_3 B_2^2 \alpha_2 - 2 A_2 B_2 B_3 \alpha_2 + 3 A_3 B_1
B_4 \alpha_2 -3 A_1 B_3 B_4 \alpha_2) X_3 + (-2 A_3 B_2 B_5 + 2
A_2 B_3 B_5 - A_2 B_1 B_6 + A_1 B_2 B_6 - A_3 B_1 B_7 + A_1 B_3
B_7 ) X_4,$\\

$X_2'=[X_1',P_1']=(B_2 C_1 + B_1 C_3) X_2 + (B_3 C_1 - B_1 C_2) X_3 + (2 B_4 C_1 + B_3 C_2 - B_2 C_3) X_4,$\\

$X_3'=[X_1',P_2']=(A_3 B_1 - A_1 B_3) C_1 X_2 - (A_2 B_1 - A_1 B_2) C_1 X_3 + (-2 A_3 B_2 C_1 + 2 A_2 B_3 C_1 - A_2 B_1 C_2 + A_1 B_2 C_2 - A_3 B_1 C_3 + A_1 B_3 C_3) X_4,$\\

$X_4'=[X_2',P_2']=((A_3 B_1 - A_1 B_3) (-B_3 C_1 + B_1 C_2) + (-A_2 B_1 + A_1 B_2) (B_2 C_1 + B_1 C_3)) X_4.$

\

Let us consider

$[T',P_1']=B_1^2 (A_3 B_1 - A_1 B_3) P_1 -
 B_1^2 (A_2 B_1 - A_1 B_2) P_2 + B_1^2 (-A_3 B_2 + A_2 B_3) T +
 B_1 (A_2 B_1 B_2 - A_1 B_2^2 + A_3 B_1 B_3 - A_1 B_3^2) \alpha_2 X_1 + (-A_2 B_1^2 B_5 + A_1 B_1 B_2 B_5 +
    A_3 B_1 B_2^2 \alpha_2 + 2 A_2 B_1 B_2 B_3 \alpha_2-3 A_1 B_2^2 B_3 \alpha_2 + 3 A_3 B_1 B_3^2 \alpha_2 -
    3 A_1 B_3^3 \alpha_2 + 5 A_3 B_1^2 B_4 \alpha_2 - 5 A_1 B_1 B_3 B_4 \alpha_2) X_2 + (-A_3 B_1^2 B_5 + A_1 B_1 B_3 B_5 -    3 A_2 B_1 B_2^2 \alpha_2 + 3 A_1 B_2^3 \alpha_2 - 2 A_3 B_1 B_2 B_3 \alpha_2 - A_2 B_1 B_3^2 \alpha_2 +
    3 A_1 B_2 B_3^2 \alpha_2 - 5 A_2 B_1^2 B_4 \alpha_2 + 5 A_1 B_1 B_2 B_4 \alpha_2) X_3 + (A_2 B_1 B_2 B_5 - A_1 B_2^2 B_5+
    A_3 B_1 B_3 B_5 - A_1 B_3^2 B_5 - 2 A_3 B_2^3 \alpha_2 + 2 A_2 B_2^2 B_3 \alpha_2 - 2 A_3 B_2 B_3^2 \alpha_2 +
    2 A_2 B_3^3 \alpha_2 - 5 A_3 B_1 B_2 B_4 \alpha_2 +  5 A_2 B_1 B_3 B_4 \alpha_2) X_4.$

\

On the other hand, we have

$[T',P_1']=3\alpha_2'X_3'=3\alpha_2'((A_3 B_1 - A_1 B_3) C_1
X_2 - (A_2 B_1 - A_1 B_2) C_1 X_3 + (-2 A_3 B_2 C_1 + 2 A_2 B_3
C_1 - A_2 B_1 C_2 +  A_1 B_2 C_2 - A_3 B_1 C_3 + A_1 B_3 C_3)
X_4).$

Comparing the coefficients at the appropriate basis elements, we get the restrictions:
$$\left\{\begin{array}{lll}
B_1^2 (A_3 B_1 - A_1 B_3)=0, \\[1mm]
B_1^2 (A_2 B_1 - A_1 B_2)=0, \\[1mm]
B_1^2 (-A_3 B_2 + A_2 B_3)=0, \\[1mm]
B_1 (A_2 B_1 B_2 - A_1 B_2^2 + A_3 B_1 B_3 - A_1 B_3^2) \alpha_2=0.
\end{array}\right.$$

Let us assume that $B_1\neq0$. Then the above restrictions transform to the following
$$\left\{\begin{array}{lll}
A_3 B_1 - A_1 B_3=0, \\[1mm]
A_2 B_1 - A_1 B_2=0, \\[1mm]
-A_3 B_2 + A_2 B_3=0, \\[1mm]
(A_2 B_1 B_2 - A_1 B_2^2 + A_3 B_1 B_3 - A_1 B_3^2) \alpha_2=0
\end{array}\right.$$
and

$P_2'=(A_2 B_2+ A_3 B_3 + 2 A_1 B_4) \alpha_2 X_1 +(A_7 B_1 + A_5
B_2 - 3 A_4 B_3 \alpha_2 + 2 A_3 B_4 \alpha_2) X_2 + (-A_6B_1
+ A_5 B_3 + 3 A_4 B_2 \alpha_2 - 2 A_2 B_4 \alpha_2) X_3+ (-A_7
B_2 + A_6 B_3 + 2 A_5 B_4 + A_1 B_1 \alpha_1) X_4.$

This leads $T'=[P_1',P_2']=0$,  which is a contradiction with assumption $B_1\neq0$.

Hence $B_1=0.$

Consider the product

$[T',P_1']=(-3 A_1 B_2^2 B_3 \alpha_2 -3 A_1 B_3^3 \alpha_2 ) X_2 + (3 A_1 B_2^3 \alpha_2 + 3 A_1 B_2 B_3^2 \alpha_2) X_3 + (-A_1B_2^2 B_5 - A_1 B_3^2 B_5 - 2 A_3 B_2^3 \alpha_2 + 2 A_2 B_2^2 B_3
\alpha_2 - 2 A_3 B_2 B_3^2 \alpha_2 + 2 A_2 B_3^3 \alpha_2) X_4.$

On the other hand,

$[T',P_1']=3\alpha_2'X_3' =- A_1 B_3 C_13\alpha_2' X_2 + A_1 B_2 C_1 3\alpha_2'X_3+(-2 A_3 B_2 C_1 + 2 A_2 B_3 C_1 +  A_1 B_2 C_2 + A_1 B_3 C_3)3\alpha_2' X_4.$

Therefore, we obtain
$$\alpha_2'=\frac{(B_2^2+B_3^2) \alpha_2}{C_1}.$$

Similarly, by considering the products $[P_2',J'], \ [J',J']$ we derive $A_1=1$ and
$$\alpha_1'=\frac{\alpha_1}{(B_2^2+B_3^2)C_1}.$$

Note that determinant of the basis transformation is equal to $(B_2^2+B_3^2)^4 C_1^4$, consequently $(B_2^2+B_3^2)C_1\neq0$.

{\bf Case 1.}  Let $\alpha_2\neq0.$ Then by putting $C_1=(B_2^2+B_3^2) \alpha_2$ we get $\alpha_2'=1$ and
$\alpha_1'=\frac{\alpha_1}{\alpha_2(B_2^2+B_3^2)^2}.$

If $\alpha_1=0$, then we obtain the algebra $L(1,0)$.

If $\alpha_1\neq 0,$ then taking by $B_2, B_3\in \mathbb{R}$ as a solution of the equation $B_2^2+B_3^2=\sqrt{\mid \frac{\alpha_1}{\alpha_2} \mid}$ we get the algebras $L(1,1)$ and $L(-1,1).$

{\bf Case 2.} Let $\alpha_2=0.$ Then $\alpha_2'=0.$

If $\alpha_1=0,$ then we obtain the algebra $L(0,0).$

If $\alpha_1\neq0,$ then taking $C_1=\frac{\alpha_1}{B_2^2 + B_3^2}$ we get the algebra $L(1,0).$
\end{proof}

\subsection{Leibniz algebras with the ideal $I$ as $\overline{\mathfrak{D}}_{\mathbb{C}}$-modules by restriction of  $\mathfrak{sp}(4,\mathbb{C})$.}

The main result of this subsection describes Leibniz algebras with corresponding the complex Diamond Lie algebra and the ideal $I$ corresponding to right module over the algebra $\overline{\mathfrak{D}}_{\mathbb{C}}$ by considering it as subalgebra of $\mathfrak{sp}(4,\mathbb{C}).$ In this case we have eight-dimensional Leibniz algebra $M$ with a basis $\{ J, P_{+}, P_{-}, T, X_1, X_2, X_3, X_4\}$ and with the products (\ref{sp4C}).

Let us introduce denotations:
\begin{equation}\label{eq7}\left\{\begin{array}{ll}
[J,P_{+}]=iP_{+}+\sum\limits_{i=1}^4a_iX_i, &[P_{+},J]=-iP_{+}+\sum\limits_{i=1}^4d_iX_i, \\[1mm]
[J,P_{-}]=-iP_{-}+\sum\limits_{i=1}^4b_iX_i,    &[P_{-},J]=iP_{-}+\sum\limits_{i=1}^4k_iX_i, \\[1mm]
[P_{+},P_{-}]=2iT+\sum\limits_{i=1}^4c_iX_i,    & [P_{-},P_{+}]=-2iT+\sum\limits_{i=1}^4l_iX_i, \\[1mm]
[J,J]=\sum\limits_{i=1}^4m_iX_i, & [P_{+},P_{+}]=\sum\limits_{i=1}^4n_iX_i, \\[1mm]
[P_{-},P_{-}]=\sum\limits_{i=1}^4p_iX_i, & [T,T]=\sum\limits_{i=1}^4q_iX_i, \\[1mm]
[J,T]=\sum\limits_{i=1}^4r_iX_i, & [T,J]=\sum\limits_{i=1}^4s_iX_i, \\[1mm]
[P_{+},T]=\sum\limits_{i=1}^4t_iX_i, & [T,P_{+}]=\sum\limits_{i=1}^4u_iX_i, \\[1mm]
[P_{-},T]=\sum\limits_{i=1}^4v_iX_i, & [T,P_{-}]=\sum\limits_{i=1}^4w_iX_i.
\end{array}\right.\end{equation}

In the following lemma we present the description of multiplications table of Leibniz algebra under the restrictions of this subsection.

\begin{lemma}\label{lemma2} An arbitrary Leibniz algebra satisfying the above conditions admits a basis $\{ J,P_{+},P_{-},T,X_1, X_2,X_3,X_4\}$ such that the table of multiplications of the algebra in this  basis has the following form:
\begin{equation}\label{eq1}M(\alpha)=\left\{\begin{array}{ll}
[J,P_{+}]=iP_{+},   &[P_{+},J]=-iP_{+}, \\[1mm]
[J,P_{-}]=-iP_{-},  &[P_{-},J]=iP_{-}, \\[1mm]
[P_{+},P_{-}]=2iT,  & [P_{-},P_{+}]=-2iT, \\[1mm]
[J,J]=\alpha X_2, &  [X_1,J]=iX_1, \\[1mm]
[X_4,J]=-iX_4, & [X_1,P_{+}]=X_2, \\[1mm]
[X_3,P_{-}]=X_1, &  [X_4,P_{-}]=-X_2,\\[1mm]
[X_3,T]=\frac{i}{2} X_2.
\end{array}\right.\end{equation}
\end{lemma}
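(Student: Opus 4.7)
The plan mirrors the strategy used in the real case analyzed in the preceding subsection: starting from the general products (\ref{eq7}), we normalize the Diamond-part of the bracket by basis changes inside $I$ and then exploit the Leibniz identities on carefully chosen triples to eliminate the residual corrections. The key structural feature of the module (\ref{sp4C}) is that $J$ acts diagonally on $I$ with eigenvalues $i, 0, 0, -i$ on $X_1, X_2, X_3, X_4$, so the only components of $[J,J]$ removable by shifting $J$ within $I$ are those along $X_1$ and $X_4$. Replacing $J$ by $J - i m_1 X_1 + i m_4 X_4$, then setting $P_{+}':=P_{+} - i \sum a_i X_i$, $P_{-}':=P_{-} + i \sum b_i X_i$, and $T' := \tfrac{1}{2i}[P_{+}',P_{-}']$ brings the Diamond part to $[J,P_{+}]=iP_{+}$, $[J,P_{-}]=-iP_{-}$, $[P_{+},P_{-}]=2iT$, leaving $[J,J]=m_2 X_2 + m_3 X_3$.

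The critical step is the Leibniz identity $\{J,J,T\}$. Since $[J,X_i]=0$ for all $i$ (because $I \subseteq Ann_r(L)$), while the module yields $(X_3,T)=\tfrac{i}{2}X_2$ and $(X_j,T)=0$ for $j \neq 3$, expanding $[J,[J,T]] = [[J,J],T] - [[J,T],J]$ forces simultaneously $m_3 = 0$ and the vanishing of the $X_1$- and $X_4$-components of $[J,T]$. A further series of Leibniz identities on triples such as $\{J,P_{\pm},J\}, \{P_{+},J,P_{-}\}, \{J,P_{\pm},T\}, \{P_{-},T,P_{+}\}, \{P_{\pm},P_{\pm},J\}, \{T,P_{\pm},J\}$ and $\{P_{-},P_{+},T\}$ completes the reduction, showing $[P_{\pm},J]=\mp iP_{\pm}$, $[T,J]=0$, $[P_{\pm},T]=[T,P_{\pm}]=0$, $[P_{\pm},P_{\pm}]=0$, $[T,T]=0$, and $[J,T]=r_2 X_2$ for some scalar $r_2$. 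Combined with $\{J,P_{+},P_{-}\}$, this also yields $[P_{-},P_{+}]=-2iT + 2r_2 X_2$.

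The remaining parameter $r_2$ is not constrained by any Leibniz identity --- it reflects a genuine basis freedom, since the $J$-weight-zero subspace of $I$ is two-dimensional (spanned by $X_2$ and $X_3$). A final basis change removes it: replacing $J$ by $J + 2ir_2 X_3$ and $P_{-}$ by $P_{-} - 2r_2 X_1$ produces $[J,T]=0$ (via $(X_3,T)=\tfrac{i}{2}X_2$) and $[P_{-},P_{+}]=-2iT$ (via $(X_1,P_{+})=X_2$), while $[J,J]=\alpha X_2$ with $\alpha := m_2$ and the module action on $I$ are preserved. Setting $\alpha := m_2$ delivers the canonical table $M(\alpha)$. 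The main obstacle is the careful bookkeeping across many Leibniz triples; in particular, one has to notice that $\{J,J,T\}$ alone kills the $X_3$-component of $[J,J]$, and that removing the last parameter $r_2$ requires an additional non-Leibniz basis change invisible to the identity-based analysis.
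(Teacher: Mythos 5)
Your proposal follows essentially the same route as the paper: normalize $[J,J]$ and the Diamond brackets by shifts inside $I$, exploit the weight structure of the $J$-action together with the triple $\{J,J,T\}$ to force $m_3=r_1=r_4=0$, sweep the remaining products with Leibniz identities, and finally remove the residual $X_2$-component of $[J,T]$ by a basis change that Leibniz identities cannot see --- the paper simply performs your closing adjustment $J \mapsto J+2ir_2X_3$, $P_-\mapsto P_--2r_2X_1$ up front, folding it into its initial change of basis. The one point to tighten is that the $X_3$-component $r_3$ of $[J,T]$ (equivalently $l_3$ in $[P_-,P_+]$) is killed by the identity $\{T,P_+,P_-\}$, which is not among the triples you list (your $\{P_-,P_+,T\}$ and $\{P_-,T,P_+\}$ only yield $[T,T]=0$), so that triple must be added to justify your claim that $[J,T]=r_2X_2$.
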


\begin{proof}

Let us take the change of basis elements:
$$\begin{array}{l}
J'=J+im_1X_1+2 i r_2 X_3-im_4X_4, \\[3mm]
P_{+}' = P_{+}-ia_1X_1+(-ia_2+m_1)X_2-ia_3X_3-ia_4X_4,\\[3mm]
P_{-}' = P_{-}+(ib_1-2r_2)X_1+(ib_2-m_4)X_2+ib_3X_3+ib_4X_4, \\[3mm]
T' = T-\frac{1}{2}(a_3+ic_1)X_1+\frac{1}{2}(a_4-ic_2)X_2-\frac{1}{2}ic_3X_3-\frac{1}{2}ic_4X_4.
\end{array}.$$
Then using the products \eqref{eq7}, we can assume that
$$[J,P_{+}]=iP_{+},\quad [J,P_{-}]=-iP_{-}, \quad [P_{+},P_{-}]=2iT,$$
$$[J,J]=m_2X_2+m_3X_3, \quad [J,T]=r_1X_1+r_3X_3+r_4X_4.$$

From the following chain of equalities
$$0=[J,[T,J]]=[[J,T],J]-[[J,J],T]=[r_1X_1+r_3X_3+r_4X_4,J]-[m_2X_2+m_3X_3,T]=$$
$$=ir_1X_1-ir_4X_4-\frac{1}{2}im_3X_2$$
we get $r_1=r_4=m_3=0,$ i.e., $[J,T]=r_3X_3$ and $[J,J]=m_2X_2.$
Similarly, applying the Leibniz identity for the triples $\{J, P_{+}, J\}$ and $\{J, P_{-}, J\}$ we obtain $$[P_{-},J]=iP_{-}, \quad [P_{+},J]=-iP_{+}.$$

Consider the Leibniz identity for the following triples of elements:
$$\left\{\begin{array}{lll}
\{J,P_{+},T\}, & \Rightarrow & t_1=t_2=t_3=t_4=0, \\[1mm]
\{P_{+},J,P_{+}\}, & \Rightarrow & n_1=n_2=n_3=n_4=0, \\[1mm]
\{P_{+},J,P_{-}\}, & \Rightarrow & s_1=s_2=s_3=s_4=0, \\[1mm]
\{P_{+},P_{-},P_{+}\}, & \Rightarrow & u_1=u_2=u_3=u_4=0, \\[1mm]
\{P_{+},P_{-},T\}, & \Rightarrow & q_1=q_2=q_3=q_4=0,  \\[1mm]
\{P_{-},J,P_{-}\}, & \Rightarrow & p_1=p_2=p_3=p_4=0, \\[1mm]
\{J,P_{-},T\}, & \Rightarrow & v_1=ir_3, v_2=v_3=v_4=0,  \\[1mm]
\{J,P_{+},P_{-}\}, & \Rightarrow & l_1=l_2=l_4=0,  l_3=2r_3, \\[1mm]
\{P_{-},P_{+},P_{-}\}, & \Rightarrow & w_2=w_3=w_4=0, w_1=-2ir_3, \\[1mm]
\{T,P_{+},P_{-}\}, & \Rightarrow & r_3=0.
\end{array}\right.$$
Denoting $\alpha:=m_2$ we deduce the family of algebras $M(\alpha)$.
\end{proof}
In the next result we present precise description (up to isomorphism) of Leibniz algebras under the conditions of the subsection.

\begin{teo}\label{theorem6} An arbitrary Leibniz algebra of the family $M(\alpha)$ is isomorphic to one of the following non-isomorphic algebras $M(1)$ and $M(0).$
\end{teo}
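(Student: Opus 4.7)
The plan is to follow the approach of Theorem~\ref{theorem5}: parameterize an arbitrary change of basis inside the family $M(\alpha)$, use the structural relations \eqref{eq1} to eliminate most of the free parameters, and read off the transformation law for the single scalar $\alpha$. Since $P_{-}, T, X_2, X_3, X_4$ are all recovered from $J, P_{+}, X_1$ through the defining products of $M(\alpha)$, it is enough to start from the ansatz
$$J' = A_1 J + A_2 P_{+} + A_3 P_{-} + A_4 T + \textstyle\sum_{i=1}^{4} A_{i+4} X_i,$$
together with analogous expressions for $P_{+}'$ and $X_1'$, and to require the resulting structure to coincide with some $M(\alpha')$.

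Imposing $[J', P_{+}'] = iP_{+}'$ and $[J', P_{-}'] = -iP_{-}'$ pins down the induced automorphism of $L/I \cong \overline{\mathfrak{D}}_{\mathbb{C}}$. Since $J$ is the unique (up to sign) semisimple element of $\overline{\mathfrak{D}}_{\mathbb{C}}$ with eigenvalues $\pm i$ on the complementary root spaces, these constraints force $A_1 = \pm 1$ and kill most of the cross-coefficients; the only essential freedom that survives is a single nonzero scalar $c \in \mathbb{C}^{*}$, equal to the product $ab$ of the leading coefficients of $X_1'$ and $P_{+}'$. The additive shifts $A_{i+4} X_i$ can then be adjusted to absorb the ambiguities produced by the non-principal components of $J'$, $P_{+}'$, and $X_1'$, exactly as in the proof of Theorem~\ref{theorem5}.

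Next, I would compute $[J', J']$ in two ways: directly from \eqref{eq1}, which gives $A_1^{2}\,\alpha X_2$ plus shift terms that cancel; and via the target relation $[J', J'] = \alpha' X_2'$ with $X_2' = [X_1', P_{+}'] = c\,X_2$. Matching coefficients produces a scaling law of the shape $\alpha' = \pm\alpha/c$. Over $\mathbb{C}$ this immediately gives two orbits: if $\alpha \neq 0$, take $c = \pm\alpha$ to normalize $\alpha' = 1$, recovering $M(1)$; if $\alpha = 0$, then $\alpha' = 0$, giving $M(0)$. The non-isomorphism of $M(0)$ and $M(1)$ follows at once from this law, since no choice of $c \in \mathbb{C}^{*}$ can send $0$ to a nonzero value, so the condition $\alpha = 0$ is an isomorphism invariant.

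The main obstacle, as in Theorem~\ref{theorem5}, is the routine but tedious verification that the ansatz above is genuinely exhaustive and that the shift contributions to $[J', J']$, $[P_{+}', P_{-}']$, and the module products can all be absorbed cleanly; this rests on the fact that, inside $M(\alpha)$, the right annihilator contains $I$ in full and the quotient $L/I$ carries a rigid enough Lie structure to force the claimed skeleton. Once this is in place, the classification into exactly the two classes $M(0)$ and $M(1)$ follows with no further input.
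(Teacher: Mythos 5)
There is a genuine gap at the very first step: your ansatz is built on the claim that $P_{-}$, $T$, $X_2$, $X_3$, $X_4$ are all recovered from $J$, $P_{+}$, $X_1$ via the products \eqref{eq1}, and this is false. Inspecting the table of $M(\alpha)$, the elements $P_{-}$, $X_3$ and $X_4$ never occur as the value of a bracket of other basis elements: $P_{-}$ only appears on the right-hand side of products that already involve $P_{-}$ (namely $[J,P_{-}]=-iP_{-}$, $[P_{-},J]=iP_{-}$), $X_3$ occurs only as a left factor (in $[X_3,P_{-}]=X_1$ and $[X_3,T]=\tfrac{i}{2}X_2$), and likewise $X_4$. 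Hence the subalgebra generated by $J$, $P_{+}$, $X_1$ is spanned by $J,P_{+},X_1,X_2$ and does not contain $P_{-}$, $T$, $X_3$, $X_4$. The correct generating set is $\{J,P_{+},P_{-},X_3,X_4\}$ (this is exactly what the paper uses), from which $T'=\frac{1}{2i}[P_{+}',P_{-}']$, $X_1'=[X_3',P_{-}']$ and $X_2'=[X_1',P_{+}']$ are then derived; note in particular that $X_1$ is \emph{not} a generator. Because your parameterization omits independent choices for the images of $P_{-}$, $X_3$ and $X_4$, it does not describe a general isomorphism between members of the family, so the subsequent elimination and ``absorption'' steps cannot be carried out as stated, and the exhaustiveness you flag as the main obstacle genuinely fails for this ansatz.

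That said, your overall strategy is the same as the paper's, and the scaling law you aim for is essentially correct once the right generators are used: the paper obtains $\alpha'=\frac{A_1^2\,\alpha}{B_2C_3P_3}$ with $B_2C_3P_3\neq 0$, where $B_2$, $C_3$, $P_3$ are the leading coefficients of $P_{+}'$, $P_{-}'$, $X_3'$ (so the denominator is the product of the leading coefficients of $X_1'=[X_3',P_{-}']$ and $P_{+}'$, consistent with your $c$), and then normalizes to $\alpha'=1$ when $\alpha\neq 0$ and observes $\alpha'=0$ when $\alpha=0$. To repair your argument you would need to (i) start from general expressions for $J'$, $P_{+}'$, $P_{-}'$, $X_3'$, $X_4'$, (ii) impose the structural identities $[T',P_{+}']=[T',P_{-}']=[J',T']=0$ and $X_2'=-2i[X_3',T']$ to cut down the parameters, and only then (iii) read off $\alpha'$ from $[J',J']=\alpha'X_2'$. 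The non-isomorphism of $M(0)$ and $M(1)$ does follow from the multiplicative form of the resulting law, as you say.
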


\begin{proof} Since the elements $J, P_{+}, P_{-}, X_3, X_4$ are generators of an algebra of the family $M(\alpha)$, we take the general transformation of these elements:
$$\begin{array}{ll}
J'=A_1J+A_2P_{+}+A_3P_{-}+A_4T+A_5X_1+A_6X_2+A_7X_3+A_8X_4,\\[1mm]
P_{+}'=B_1J+B_2P_{+}+B_3P_{-}+B_4T+B_5X_1+B_6X_2+B_7X_3+B_8X_4,\\[1mm]
P_{-}'=C_1J+C_2P_{+}+C_3P_{-}+C_4T+C_5X_1+C_6X_2+C_7X_3+C_8X_4,\\[1mm]
X_3'=P_1X_1+P_2X_2+P_3X_3+P_4X_4, \\[1mm]
 X_4'=Q_1X_1+Q_2X_2+Q_3X_3+Q_4X_4.
\end{array}$$

The rest basis elements $T', X_1', X_2'$ can be find from the products:
$$T'=\frac{1}{2i}[P_{+}',P_{-}'], \quad X_1'=[X_3',P_{-}'], \quad X_2'=[X_1',P_{+}'].$$

Similarly as in the proof of Theorem \ref{theorem5} from the equalities
$$[T',P_{+}']=[T',P_{-}']=[J',T']=0, \quad X_2'=-2i[X_3',T'], \quad [J',J']=\alpha'X_2',$$
we derive the expression:
$$\alpha'=\frac{ A_1^2 \alpha}{B_2C_3P_3} \ \mbox{with} \ B_2C_3P_3\neq0.$$

If $\alpha\neq0,$ then by choosing $A_1:=\sqrt{\frac{B_2C_3P_3}{\alpha}}$ we obtain the algebra $M(1)$.

If $\alpha=0,$ then we get the algebras $M(0)$.
\end{proof}

\subsection{Leibniz algebras associated with the Diamond Lie algebra $\mathfrak{D}_{\mathbb{C}}$ and its Fock module.}

In this subsection we define Fock module over algebra $\mathfrak{D}_{\mathbb{C}}$. For algebra $\mathfrak{D}_{\mathbb{C}}$ we have the existence of a basis $\{J, P_1,P_2, T\}$  with the table of multiplications \ref{1.3}.

Let us introduce new notations for the basis elements of $\mathfrak{D}_{\mathbb{C}}:$
$$\overline{e}=J,\quad \overline{x}=P_1, \quad \frac { \overline{\delta}}{\delta x} = P_2, \quad \overline{1} = T.
$$

The action of the linear space ${\mathbb{C}}[x]$ on $\{\overline{1}, \overline{x}, \frac{\overline{\delta}}{\delta x}\}$ is induced by (\ref{Fock}). Further, we need to define the action on $\overline{e}$.

We set
$$(1,\overline{e})=\lambda+\lambda_1x+\lambda_2x^2+\dots+\lambda_nx^n.$$

Now we consider the Leibniz identity for the elements $1\in \mathbb{C}[x], \ \overline{e}, \frac{\overline{\delta}}{\delta x} \in \mathfrak{D}_{\mathbb{C}}:$
$$0=(1,[\overline{e},\frac{\overline{\delta}}{\delta x}])
=((1,\overline{e}),\frac{\overline{\delta}}{\delta
x})-((1,\frac{\overline{\delta}}{\delta x}),
\overline{e})=(\lambda+\lambda_1x+\lambda_2x^2+\dots+\lambda_nx^n,\frac{\overline{\delta}}{\delta
x}) =\sum\limits_{i=1}^ni\lambda_ix^{i-1}.$$

Therefore, $(1,\overline{e})=\lambda.$

From the equalities
$$x=(1,\overline{x})=(1,[\overline{e},\overline{x}])=((1,\overline{e}),\overline{x})-
((1,\overline{x}),\overline{e})=(\lambda,\overline{x})-(x,\overline{e})=\lambda
x-(x,\overline{e}),$$ we derive $(x,\overline{e})=(\lambda-1)x.$

By induction one can prove the equality:
$$(x^t,\overline{e})=(\lambda-t)x^t, \quad t\in{\mathbb{N}\cup\{0\}}.$$

Taking the change basis $\overline{e}':=\overline{e}-\lambda\overline{1}$ we can assume that $(x^t,\overline{e})=-tx^t, \ t\geq0.$

Therefore, the action on $\overline{e}$ is defined as follow:
$$(p(x),\overline{e}) \mapsto  -x\frac {\delta(p(x))}{\delta x}.$$

\begin{de} A linear space ${\mathbb{C}}[x]$ is called Fock $\mathfrak{D}_{\mathbb{C}}$-module, if there is an action $({\mathbb{C}}[x], \mathfrak{D}_{\mathbb{C}}) \mapsto {\mathbb{C}}[x]$ which satisfy the followings:
\begin{equation}\label{Fock1}
\begin{array}{lll}
(p(x),\overline{1})& \mapsto & p(x),\\
{}(p(x),\overline{x})& \mapsto & xp(x),\\
{}(p(x),\frac{\overline{\partial}}{\partial x})& \mapsto &\frac{\partial}{\partial x}(p(x)),\\
{}(p(x),\overline{e}) & \mapsto & -x\frac {\delta(p(x))}{\delta x}.
\end{array}
\end{equation}
for any $p(x) \in \mathbb{C}[x].$
\end{de}

The main result of this subsection consists of the classification of Leibniz algebras, whose corresponding Lie algebra is the complex Diamond Lie algebra $\mathfrak{D}_{\mathbb{C}}$ and the ideal $I$ is the Fock  $\mathfrak{D}_{\mathbb{C}}$-module.

\begin{teo}\label{1} The Leibniz algebra $L$ with conditions $L/I\cong \mathfrak{D}_{\mathbb{C}}$ and
 $I$ is the Fock $\mathfrak{D}_{\mathbb{C}}$-module, admits a basis
$$ \{\overline{1}, \overline{x}, \frac {\overline{\delta}}{\delta x}, \  \overline{e}, \ x^{t} \ | \ t\in \mathbb{N}\cup \{0\}\}$$
 such  that the table of multiplications in this basis has the following form:
$$\begin{array}{lll}
[\overline{e},\overline{x}]=\overline{x}, & [\overline{x}, \overline{e}]=-\overline{x}, \\[1mm]
[\overline{e},\frac {\overline{\delta}}{\delta x}]=-\frac {\overline{\delta}}{\delta x}, & [\frac {\overline{\delta}}{\delta x},\overline{e}]=\frac {\overline{\delta}}{\delta x},\\[1mm]
[\overline{x},\frac{\overline{\delta}}{\delta x}]=\overline{1},  & [\frac{\overline{\delta}}{\delta x},\overline{x}]=-\overline{1}, \\[1mm]
[x^t,\overline{1}] = x^{t}, & [x^t,\overline{x}] = x^{t+1}, \\[1mm]
[x^{t},\frac{\overline{\delta}}{\delta x}] = tx^{t-1}, & [x^t,\overline{e}] = -tx^{t},
 \end{array}$$ where the omitted products are equal to zero.
\end{teo}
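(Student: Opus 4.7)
The plan is to adapt the strategy of Lemma~\ref{12}: write each product among $\{\overline{e}, \overline{x}, \frac{\overline{\delta}}{\delta x}, \overline{1}\}$ as the Lie bracket of $\mathfrak{D}_{\mathbb{C}}$ plus an unknown correction $f_{a,b}(x) \in \mathbb{C}[x]$ (the correction being the entire product whenever the Lie bracket vanishes, e.g.\ for the six squares and for all products with $\overline{1}$), and then progressively eliminate every correction by combining basis changes in the ``Lie part'' with the Leibniz identity. The module part of the table is already fixed by the Fock action \eqref{Fock1}. The essential difference from Lemma~\ref{12} is that the eigenvalues of the $\overline{e}$-action on the monomials $x^t$ are $\alpha_t = -t$, so the forbidden values $\{-2,0,2\}$ appearing in that lemma are attained here at $t=0$ and $t=2$ and must be treated separately.

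The concrete steps are as follows. First, perform the basis shift $\overline{e}' := \overline{e} + \sum_{t\geq 1}(m_t/t)\, x^t$, where $[\overline{e},\overline{e}] = \sum_t m_t x^t$; since $[x^t,\overline{e}] = -t x^t$ from \eqref{Fock1}, this kills every coefficient of $[\overline{e}',\overline{e}']$ except the constant one. Analogous shifts of $\overline{x}$, $\frac{\overline{\delta}}{\delta x}$, and $\overline{1}$ reduce the corrections of $[\overline{e},\overline{x}]$, $[\overline{e},\frac{\overline{\delta}}{\delta x}]$ and $[\overline{x},\frac{\overline{\delta}}{\delta x}]$ to polynomials supported only on the critical degrees $0$ and $2$. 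Next, apply the Leibniz identity to the pure-Lie triples $\{\overline{e},\overline{e},\overline{x}\}$, $\{\overline{e},\overline{e},\frac{\overline{\delta}}{\delta x}\}$, $\{\overline{x},\overline{x},\overline{e}\}$, $\{\overline{x},\overline{x},\frac{\overline{\delta}}{\delta x}\}$, $\{\frac{\overline{\delta}}{\delta x},\frac{\overline{\delta}}{\delta x},\overline{x}\}$ and $\{\overline{e},\overline{x},\frac{\overline{\delta}}{\delta x}\}$ to force the remaining coefficients to vanish. For instance, the identity $\{\overline{x},\overline{x},\frac{\overline{\delta}}{\delta x}\}$ expands to $0 = [[\overline{x},\overline{x}],\frac{\overline{\delta}}{\delta x}] - [[\overline{x},\frac{\overline{\delta}}{\delta x}],\overline{x}]$, and using $[x^2,\frac{\overline{\delta}}{\delta x}]=2x$ from \eqref{Fock1} pins the exceptional $x^2$-coefficient of $[\overline{x},\overline{x}]$ to zero. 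The antisymmetric products $[\overline{x},\overline{e}]=-\overline{x}$, $[\frac{\overline{\delta}}{\delta x},\overline{e}]=\frac{\overline{\delta}}{\delta x}$, $[\frac{\overline{\delta}}{\delta x},\overline{x}]=-\overline{1}$ then drop out of the $\{a,a,b\}$-type identities, exactly as in Lemma~\ref{12}.

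The main obstacle is the bookkeeping around the two exceptional eigenvalues $\alpha_0 = 0$ and $\alpha_2 = -2$: shifts in $\overline{e}$ cannot remove the constant coefficient of any square, nor the $x^2$-coefficient of $[\overline{x},\overline{x}]$, $[\frac{\overline{\delta}}{\delta x},\frac{\overline{\delta}}{\delta x}]$, $[\overline{x},\frac{\overline{\delta}}{\delta x}]$, so these residual terms have to be killed by Leibniz identities whose right-hand side uses the action of $\overline{x}$ or $\frac{\overline{\delta}}{\delta x}$ to map the critical degree into a different, nonzero one (as in the $\{\overline{x},\overline{x},\frac{\overline{\delta}}{\delta x}\}$ computation above). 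Once each $f_{a,b}$ is shown to vanish, the multiplication table reduces to precisely the one displayed in the statement.
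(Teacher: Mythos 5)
Your route is genuinely different from the paper's, and as written it has a gap exactly where the paper's argument does its real work: the products involving $\overline{1}$. Since $T$ is central in $\mathfrak{D}_{\mathbb{C}}$, all seven products $[\overline{1},\overline{1}]$, $[\overline{e},\overline{1}]$, $[\overline{1},\overline{e}]$, $[\overline{x},\overline{1}]$, $[\overline{1},\overline{x}]$, $[\frac{\overline{\delta}}{\delta x},\overline{1}]$, $[\overline{1},\frac{\overline{\delta}}{\delta x}]$ lie entirely in $I$, and none of them is reached by the $J$-eigenvalue mechanism you import from Lemma \ref{12}; your plan never says how they are eliminated, yet your sample computation for $\{\overline{x},\overline{x},\frac{\overline{\delta}}{\delta x}\}$ already assumes $[\overline{x},[\overline{x},\frac{\overline{\delta}}{\delta x}]]=[\overline{x},\overline{1}]=0$. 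There is also a smaller inaccuracy: shifting $\overline{x}$ and $\frac{\overline{\delta}}{\delta x}$ by elements of $I$ kills the corrections of $[\overline{e},\overline{x}]$, $[\overline{e},\frac{\overline{\delta}}{\delta x}]$ and $[\overline{x},\frac{\overline{\delta}}{\delta x}]$ \emph{completely}, with no residue at the critical degrees, because $I\subseteq Ann_r(L)$ means the shift does not feed back; the exceptional eigenvalues $0$ and $-2$ only obstruct the shift of $\overline{e}$ itself (the constant term of $[\overline{e},\overline{e}]$) and the eigenvalue identities used for the squares and reversed products.

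The paper turns the $\overline{1}$-products from an obstacle into the engine of the whole proof. It first sets $\overline{x}':=\overline{x}-p(x)$ where $[\overline{x},\overline{1}]=p(x)$, and similarly for the other three generators; because $\overline{1}=T$ acts as the \emph{identity} operator on the Fock module by (\ref{Fock1}), this single shift gives $[\overline{x}',\overline{1}']=p(x)-[p(x),\overline{1}]=0$, and likewise $[\mathfrak{D},\overline{1}]=0$. Then for any $y,z$ in the complement of $I$ the Leibniz identity yields $[[y,z],\overline{1}]=[y,[z,\overline{1}]]+[[y,\overline{1}],z]=0$, while writing $[y,z]=\mu(y,z)+f(x)$ with $f\in I$ gives $[[y,z],\overline{1}]=f(x)$, again because $\overline{1}$ acts as the identity on $I$ and annihilates the complement from the right. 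Hence every correction $f$ vanishes simultaneously --- squares, reversed products, $\overline{1}$-products and exceptional degrees alike --- with no case analysis. If you insist on the eigenvalue approach you must first prove that the seven $\overline{1}$-products vanish (which essentially forces you through the paper's normalization anyway) and then still dispose of the degree-$0$ and degree-$2$ residues one identity at a time; the observation that $T$ is represented by the identity makes all of that unnecessary, and is the idea your proposal is missing.
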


\begin{proof}

Taking into account the action (\ref{Fock1}) we conclude that $ \{\overline{1}, \overline{x}, \frac {\overline{\delta}}{\delta x}, \  \overline{e}, \ x^{t} \ | \ t\in \mathbb{N}\cup \{0\}\}$ is a basis of $L$
and
$$[x^{t},\overline{1}] = x^{t}, \quad [x^t,\overline{x}] = x^{t+1},\quad [x^t,\frac {\overline{\delta}}{\delta x}] = tx^{t-1}, \quad  [x^t,\overline{e}] = -tx^{t}.$$
Let us denote
$$[\frac {\overline{\delta}}{\delta x},\overline{1}]=q(x),\quad
[\overline{1},\overline{1}]=r(x), \quad
[\overline{x},\overline{1}]=p(x), \quad
[\overline{e},\overline{1}]=m(x).$$

Taking the following change of basis elements:
$$\frac {\overline{\delta}}{\delta x}^{\prime} = \frac
{\overline{\delta}}{\delta x} - q(x), \quad \overline{1}^{\prime}
= \overline{1} - r(x), \quad \overline{x}^{\prime} = \overline{x}
- p(x),\quad \overline{e}^{\prime} = \overline{e} - m(x),$$ we obtain
$$[\overline{x},\overline{1}]=0, \quad [\frac {\overline{\delta}}{\delta x},\overline{1}]=0, \quad
[\overline{1},\overline{1}]=0, \quad
[\overline{e},\overline{1}]=0.$$

Consequently, $\overline{1}\in Ann_r(\mathfrak{D})$.

The chain of equalities
$[[\mathfrak{D},\mathfrak{D}],\overline{1}]=[\mathfrak{D},[\mathfrak{D},\overline{1}]]+[[\mathfrak{D},\overline{1}],\mathfrak{D}]=0$,
imply

$$[\overline{1},\overline{x}]=[\overline{1},\frac {\overline{\delta}}{\delta x}]=[\overline{1},\overline{e}]=[\overline{x}, \overline{x}]=[ \frac {\overline{\delta}}{\delta x},
\frac {\overline{\delta}}{\delta x}]=
[\overline{e}, \overline{e}]= 0,$$
$$
[\overline{x}, \frac {\overline{\delta}}{\delta x}]=-[\frac {\overline{\delta}}{\delta x}, \overline{x}]=\overline{1}, \quad [\overline{e},\overline{x}]=-[\overline{x},\overline{e}]=\overline{x},
\quad [ \frac {\overline{\delta}}{\delta x},\overline{e}]= -[\overline{e},\frac {\overline{\delta}}{\delta x}]=\frac {\overline{\delta}}{\delta x}.$$

\end{proof}

\subsection{Calculation of the spaces $BL^2$ and $ZL^2$}

In order to simplify routine calculations in the next subsection here we present two programs implemented in Mathematica 10, which calculate the general form of elements of the spaces $BL^2$ and $ZL^2$.

Below we explain the algorithm of the computer program which outputs the general form of 2-coboundary for a given Leibniz algebra. We input the dimension and the table of multiplications of a given $n$-dimensional Leibniz algebra, as well as a linear map $d(x_i)=\sum\limits_{k=1}^nb_{i,j}x_j$. The output of this program is the general form of 2-coboundary, that is, the map $f(x_i,x_j),$ which satisfy the condition:
$$f(x_i,x_j)=d([x_i,x_j])-[d(x_i),x_j]-[x_i,d(x_j)]. \eqno(16)$$

Initially, we introduce the necessary conditions for being Leibniz algebra and linear map. Next, we define a  Leibniz algebra $L$ through its table of multiplication and the linear map $d.$ Finally, we compute the general form of elements of $BL^2(L, L)$ imposing the condition (16).

The next program gives the general form of 2-cocycle for a given $n$-dimensional Leibniz algebra. Similarly, we input dimension of an algebra and its table of multiplication. Output of the program is $f(x_i,x_j)=\sum\limits_{k=1}^na_{i,j,k}x_k,$ which is the solution of the following equality:
$$[x_i, f(x_j, x_k)] + [f(x_i, x_k), x_j] - [f(x_i, x_j), x_k] - f([x_i, x_j], x_k) + f([x_i, x_k], x_j) + f(x_i, [x_j, x_k])=0. \eqno(17)$$

In fact, from these equality we find values $a_{i,j,k}$ in the expression $f(x_i,x_j)=\sum\limits_{k=1}^na_{i,j,k}x_k$. Thus, output of the program is the general form of a 2-cocycle $f(x_i,x_j)$.
The steps 1 and 2 of this program are analogous to the before. Finally, we compute the general form of elements of the space $ZL^2(L,L)$. 

Let us describe in details these steps:

\begin{itemize}
\item Description of equality (17)

\begin{verbatim}
ident[i_Integer, j_Integer, k_Integer] :=
Collect[mu[x[i], f[x[j], x[k]]] + mu[f[x[i], x[k]], x[j]] -mu[f[x[i], x[j]], x[k]]
- f[mu[x[i], x[j]], x[k]] + f[mu[x[i], x[k]], x[j]] + f[x[i], mu[x[j], x[k]]], base];
\end{verbatim}

\item Selection of non-zero coefficients in relation to the base. Equate selected coefficients to zero and solve the system of equations.

\begin{verbatim}
lista1 = Select[Flatten[Table[Coefficient[ident[i, j, k], base],
{i, 1, dim}, {j, 1, dim}, {k, 1, dim}]], ! NumberQ[#] &];

equations = Map[(# == 0 ) &, lista1];
resolution = Solve[equations, Reverse[par]][[1]];}
\end{verbatim}

\item Finally, substituting the solutions in the expression of $f$ we obtain the general form of 2-cocycles:

\begin{verbatim}
Module[{i,j},For[i = 1, i <= dim, i++,
For[j = 1, j <= dim, j++,
g[x[i], x[j]] =
Collect[f[x[i], x[j]] /. resolution, base, Simplify]]]];

application := Module[{i, j},
For[j = 1, j <= dim, j++,
For[i = 1, i <= dim, i++,
If[! NumberQ[g[x[i], x[j]]], Print["f[", x[i], ",", x[j], "]=",
g[x[i], x[j]]]]]]];
Print["General form of ZL^2:"] application
\end{verbatim}
\end{itemize}

Using the above mentioned two programs we find the general form of 2-coboundaries and 2-cocycles. After that we easily find a basis of the spaces $BL^2$ and $ZL^2,$ respectively. Later on, applying the methods of linear algebra we find the basis of the space $HL^2$.

\subsection{Linear deformations of Leibniz algebras associated with representations of the Diamond Lie algebra}

In this subsection using the computer programs of previous subsection we calculate the basis of the second group of
cohomologies for the finite-dimensional algebras obtained above. The verifications of integrability of linear deformations is also carried out by using computer programs.

\begin{pr} The basis of the space $HL^2(L_1,L_1)$ consists of cocycles $\{\varphi_1,\varphi_2,\varphi_3\},$ where
$$\varphi_1:\left\{\begin{array}{lll}
\varphi_ 1(X_ 1, J) = X_ 1,  &   \varphi_ 1(X_ 2, J) = X_ 2,   &  \varphi_ 1(X_ 3, J) = X_ 3, \\ [1 mm]
\end{array}\right.$$
$$\varphi_2:\left\{\begin{array}{lll}
\varphi_ 2(P_ -, P_ +) = J, &   \varphi_ 2[P_ +,
   P_ -] = -J, &  \varphi_ 2(X_ 2, P_ -) = -i/2 X_ 1, \\ [1 mm]
\varphi_ 2(X_ 1, T) = X_ 1/12, &   \varphi_ 2(X_ 2, T) =  X_ 2/12, &  \varphi_ 2(X_ 3, T) = -X_ 3/6, \\ [1 mm]
\end{array}\right.$$
$$\varphi_3:\left\{\begin{array}{lll}
\varphi_ 3(T, P_ +) =  P_ + , &  \varphi_ 3(T, P_ -) = -P_ -, &    \varphi_ 3(X_ 2, P_ -) =
 iX_ 1, \\ [1 mm]
\varphi_ 3(P_ +, T) = -P_ +, &  \varphi_ 3(P_ -, T) =  P_ -, &  \varphi_ 3(X_ 1, T) = X_ 1/2, \\ [1 mm]
\varphi_ 3(X_ 2, T) = -X_ 2/2. & &
\end{array}\right.$$
\end{pr}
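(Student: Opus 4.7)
The plan is to compute $HL^2(L_1,L_1)=ZL^2(L_1,L_1)/BL^2(L_1,L_1)$ by direct application of the two Mathematica routines described in the previous subsection, specialized to the seven-dimensional algebra $L_1$ with basis $\{J,P_+,P_-,T,X_1,X_2,X_3\}$ and the multiplication table displayed in the statement of $L_1$. Since $\dim C^2(L_1,L_1)=7^3=343$, any attempt by hand is impractical; the cocycle condition (\ref{E.Z2}) produces a very large homogeneous linear system in the $343$ unknown structure coefficients $a_{i,j,k}$, and the coboundary formula (\ref{E.B2}) yields another linear system in the $49$ parameters $b_{i,j}$ of a generic linear map $d\colon L_1\to L_1$. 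Both are exactly the systems that the programs assemble and solve.

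The concrete steps I would follow are: first, feed the structure constants of $L_1$ into the $ZL^2$ routine. The output is the general bilinear map $\varphi\colon L_1\times L_1\to L_1$ satisfying $(d^2\varphi)(x,y,z)=0$; collecting the free parameters produces a basis of $ZL^2(L_1,L_1)$. Second, feed the same structure constants into the $BL^2$ routine and collect the image $f(x,y)=[d(x),y]+[x,d(y)]-d([x,y])$ as $d$ ranges over $C^1(L_1,L_1)$; extracting the image of the corresponding linear map yields a basis of $BL^2(L_1,L_1)$. Third, using standard linear algebra (row reduction against the $BL^2$ basis inside the $ZL^2$ basis) I would read off a complete set of coset representatives and so obtain $\dim HL^2(L_1,L_1)=3$ together with explicit representatives.

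The final step is to compare the three claimed cocycles $\varphi_1,\varphi_2,\varphi_3$ with this computed quotient. For this I would (i) plug each $\varphi_j$ into (\ref{E.Z2}) and verify via a direct, triple-by-triple check (or again by the program) that $d^2\varphi_j=0$, so $\varphi_j\in ZL^2(L_1,L_1)$; (ii) check that no nonzero linear combination $c_1\varphi_1+c_2\varphi_2+c_3\varphi_3$ lies in $BL^2(L_1,L_1)$, by attempting to solve $c_1\varphi_1+c_2\varphi_2+c_3\varphi_3=d^1d$ for $d\in C^1(L_1,L_1)$ and confirming that the resulting inhomogeneous system forces $c_1=c_2=c_3=0$; and (iii) conclude, using $\dim HL^2(L_1,L_1)=3$ from the previous step, that $\{\varphi_1,\varphi_2,\varphi_3\}$ descends to a basis of $HL^2(L_1,L_1)$.

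The main obstacle is not conceptual but organizational: the sheer size of the cocycle system means that any hand verification of even a single identity like $d^2\varphi_2=0$ requires checking it on all ordered triples of basis vectors, and the coboundary comparison in step (ii) must be done carefully because many of the diagonal entries in the table of $L_1$ (for example $[J,J]$, $[T,T]$, $[P_\pm,P_\pm]$) vanish, which makes $BL^2$ proper but nontrivial and causes several near-coincidences between cocycles and coboundaries. These near-coincidences are precisely what is isolated by the specific fractional coefficients $\tfrac{1}{12},\tfrac{1}{6},\tfrac{1}{2},\tfrac{i}{2},i$ appearing in $\varphi_2$ and $\varphi_3$: they are the unique choices (up to $BL^2$) that make the cocycle identity (\ref{E.Z2}) hold on the triples involving $T$. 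Once the programs have produced the general solutions, the verification of the proposition reduces to bookkeeping, and this is how I would complete the argument.
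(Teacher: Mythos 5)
Your proposal is correct and follows essentially the same route as the paper: the paper gives no written proof for this proposition, relying exactly on the two Mathematica routines of the preceding subsection to produce the general forms of elements of $ZL^2(L_1,L_1)$ and $BL^2(L_1,L_1)$, followed by the linear-algebra step of extracting coset representatives for the quotient $HL^2$. Your additional verification steps (checking each $\varphi_j$ satisfies (\ref{E.Z2}) and that no nontrivial combination is a coboundary) are just an explicit spelling-out of that same bookkeeping.
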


Let us consider a linear deformation $\mu_t=\mu+t(a\varphi_1+b\varphi_2+c\varphi_3),$ where $\mu$ is multiplication law of the algebra $L_1$. Setting $a_1'=ta_1, \ a_2'=ta_2, \ a_3'=ta_3$ we can assume that parameter $t$ is equal to 1. Application of Leibniz identity for $\mu_t$ implies that a linear deformation $\mu_t$ is integrable if and only if $a_1a_2=0.$ Therefore, we have two linear deformations of the algebra $L_1:$
$$\mu_t^1=\mu+a_2\varphi_2+a_3\varphi_3, \quad \mu_t^2=\mu+a_1\varphi_1+a_3\varphi_3.$$

For the case of the algebra $L_2$ we have the following result.
\begin{pr} The basis of $HL^2(L_2,L_2)$ consists of cocycles $\{\varphi_1,\varphi_2,\varphi_3\},$ where
$$\varphi_1:\left\{\begin{array}{lll}
\varphi_1(X_1,J)=X_1,   & \varphi_1(X_2,J)=X_2, & \varphi_1(X_3,J)=X_3,
\end{array}\right.$$
$$\varphi_2:\left\{\begin{array}{lll}
\varphi_2(P_+,P_-)=-J,  & \varphi_2(P_-,P_+)=J, & \varphi_2(X_2,P_-)=-\frac{1}{2} i X_1, \\[1mm]
\varphi_2(X_1,T)=\frac{1}{12} X_1,  & \varphi_2(X_2,T)=-\frac{1}{12} X_2, & \varphi_2(X_3,T)=\frac{1}{6} X_3,
\end{array}\right.$$
$$\varphi_3:\left\{\begin{array}{lll}
\varphi_3(P_+,T)=-P_+,  & \varphi_3(T,P_+)=P_+, & \varphi_3(P_-,T)=P_-, \\[1mm]
\varphi_3(T,P_-)=-P_-,  & \varphi_3(X_2,P_-)=i X_1, & \varphi_3(X_1,T)=\frac{1}{2} X_1, \\[1mm]
\varphi_3(X_2,T)=-\frac{1}{2} X_2. & &.
\end{array}\right.$$
\end{pr}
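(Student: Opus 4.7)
The plan is to mirror the strategy that works for $L_1$: use the two Mathematica routines described in Subsection~3.5 to compute $ZL^2(L_2,L_2)$ and $BL^2(L_2,L_2)$ explicitly, and then extract a basis of the quotient $HL^2(L_2,L_2) = ZL^2(L_2,L_2)/BL^2(L_2,L_2)$. First I would feed the table of multiplication of $L_2$ (with basis $\{J,P_+,P_-,T,X_1,X_2,X_3\}$) into the $ZL^2$--program. Writing a generic bilinear map $\varphi(x_i,x_j)=\sum_{k=1}^{7}a_{ijk}x_k$, the cocycle condition \eqref{E.Z2} produces a large homogeneous linear system in the $a_{ijk}$; solving it yields the general form of a 2-cocycle as a linear combination of finitely many basic cocycles, say $\psi_1,\dots,\psi_N$.

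Next I would run the $BL^2$--program: starting from an arbitrary linear map $d(x_i)=\sum_j b_{ij}x_j$, formula \eqref{E.B2} gives the general coboundary as a linear combination of finitely many basic coboundaries $\chi_1,\dots,\chi_M$. At this point the computation of $HL^2(L_2,L_2)$ is pure linear algebra: I would assemble the coordinate matrix of $\chi_1,\dots,\chi_M$ inside the $\mathbb{F}$-vector space spanned by $\psi_1,\dots,\psi_N$, row-reduce to get a basis of $BL^2$ inside $ZL^2$, and then complete this to a basis of $ZL^2$. The complementary vectors are the required basis of $HL^2$.

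The final step is simply to check that the three candidates $\varphi_1,\varphi_2,\varphi_3$ stated in the proposition do lie in $ZL^2(L_2,L_2)$ (direct substitution into \eqref{E.Z2} on all triples of basis vectors, automated by the program), and that they are linearly independent modulo $BL^2$. The latter is done by verifying that no nontrivial combination $\alpha_1\varphi_1+\alpha_2\varphi_2+\alpha_3\varphi_3$ can be written as $d^1 f$ for a linear $f:L_2\to L_2$; since the coboundary relations are already parametrised by the program's output, this reduces to showing the corresponding linear system in $\alpha_1,\alpha_2,\alpha_3$ and the $b_{ij}$ has only the trivial solution in $(\alpha_1,\alpha_2,\alpha_3)$.

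The main obstacle will not be conceptual but organisational: $L_2$ is $7$-dimensional, so $\varphi$ has up to $7\cdot 7\cdot 7=343$ unknown scalars, and the cocycle identity must be checked on all $7^3$ ordered triples, so the linear system is genuinely large. This is precisely why the computer programs of Subsection~3.5 were written, and without them one could hardly keep track of the computation by hand. Once the system is reduced, choosing the representatives $\varphi_1,\varphi_2,\varphi_3$ in the neat form given in the statement is a matter of normalising the free parameters (for instance, the action on $(X_i,J)$ yields $\varphi_1$, the actions linking $\{P_+,P_-,J\}$ and the module action of $T$ yield $\varphi_2$, and the $T$-actions on $P_\pm$ yield $\varphi_3$), in complete analogy with the presentation chosen for the basis of $HL^2(L_1,L_1)$ in the preceding proposition.
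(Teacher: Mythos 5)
Your proposal matches the paper's method exactly: the authors give no hand computation for this proposition but rely on the two Mathematica routines of Subsection~3.5 to produce the general 2-cocycle and 2-coboundary, followed by linear algebra on the quotient to select the representatives $\varphi_1,\varphi_2,\varphi_3$. Your outline, including the normalisation of free parameters at the end, is the same approach.
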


Applying Leibniz identity to law $\mu_t$ deduces that a linear deformation $\mu_t=\mu+t(a_1\varphi_1+a_2\varphi_2+a_3\varphi_3)$ of algebra $L_2$ is integrable if and only if $a_2=0.$
Therefore, we obtain that any linear integrable deformation of algebra $L_2$ has the form $\mu_t=\mu+a_1\varphi_1+a_3\varphi_3$.

Since we are focused on linear integrable deformations of finite-dimensional Leibniz algebras obtained in this work, we shall omit the complete list of basis elements of $HL^2$ and we just present linear integrable deformations for the remaining algebras.

\begin{pr} An arbitrary linear integrable deformation of the algebra $L(1,0)$ has the following form:
$$\mu_t=\mu+a\varphi,$$
where
$$\varphi : \left\{\begin {array} {lll}
\varphi (P_ 1, P_ 1) =X_ 1, &  \varphi (T, P_ 1) =3 X_ 3,  &  \varphi (P_ 2, P_ 2) = X_ 1, \\[1 mm]
 \varphi(T, P_ 2) = -3 X_ 2, &  \varphi(J,T) = 2 X_ 1, & \varphi(P_ 1, T) = -2 X_ 3, \\[1 mm]
 \varphi(P_ 2, T) = 2 X_ 2. & &
 \end {array} \right.$$
\end{pr}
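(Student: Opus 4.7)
The plan is to mimic the cohomological strategy carried out for $L_1$ and $L_2$ in the previous propositions, but now for the eight-dimensional algebra $L(1,0)$ (the Leibniz algebra from Lemma~\ref{lemma2} with $\alpha_1=1,\alpha_2=0$). First I would run the two Mathematica programs described in Subsection 4.5 on the multiplication table of $L(1,0)$: one produces the general form of a 2-coboundary $f=d^1(h)$ for an arbitrary linear endomorphism $h$, and the other produces the general form of a 2-cocycle $\psi \in ZL^2(L(1,0),L(1,0))$ by imposing the system obtained from condition~\eqref{E.Z2}. Subtracting the parametric description of $BL^2$ from $ZL^2$ via elementary linear algebra yields a basis $\{\varphi^{(1)},\dots,\varphi^{(k)}\}$ of the cohomology space $HL^2(L(1,0),L(1,0))$.

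Next I would write the general linear deformation as
\[
\mu_t \;=\; \mu \;+\; t\bigl(a_1\varphi^{(1)}+\cdots+a_k\varphi^{(k)}\bigr),
\]
and (absorbing $t$ into the parameters $a_i$ as in the proof for $L_1$) substitute the cocycle $\varphi_1=\sum a_i\varphi^{(i)}$ into the integrability equation~\eqref{Leibnizcocycle}
\[
\varphi_1(x,\varphi_1(y,z))-\varphi_1(\varphi_1(x,y),z)+\varphi_1(\varphi_1(x,z),y)=0.
\]
Evaluating this on all ordered triples of basis vectors $\{J,P_1,P_2,T,X_1,X_2,X_3,X_4\}$ produces a homogeneous quadratic system in the $a_i$. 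Solving this system (again with Mathematica) should force most coordinates to vanish, leaving a one-parameter family whose only surviving direction is the cocycle $\varphi$ displayed in the statement.

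Finally, one needs to verify, as a sanity check, that the specific $\varphi$ shown is indeed a 2-cocycle satisfying condition~\eqref{E.Z2} and that the quadratic obstruction vanishes identically for $\varphi_1=a\varphi$, so that $\mu_t=\mu+a\varphi$ is an honest Leibniz algebra for every scalar $a$. This reduces to checking the Leibniz identity on the new multiplication, which is straightforward because $\varphi$ takes values in the abelian ideal $I=\mathrm{span}\{X_1,X_2,X_3,X_4\}$ and because its nonzero values replicate the pattern of the $\alpha_2$-terms in the family $L(\alpha_1,\alpha_2)$.

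The main obstacle I expect is bookkeeping rather than conceptual: the dimension of $HL^2$ may be moderately large, so the quadratic system coming from~\eqref{Leibnizcocycle} will have many equations, and one must be careful to first eliminate coboundary ambiguity (by fixing a transversal to $BL^2$ inside $ZL^2$) before solving, so that the remaining parameters honestly parameterize inequivalent deformations. A secondary subtlety is that certain cocycles in $HL^2$ may be obstructed only by nontrivial combinations with others (cross-terms $a_ia_j$ with $i\neq j$), so the nullity locus need not be a coordinate subspace; the analysis must cleanly extract the single direction $\varphi$ just as the $L_1$-case extracted the condition $a_1a_2=0$.
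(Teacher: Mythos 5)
Your proposal follows exactly the strategy the paper uses (and leaves implicit for $L(1,0)$): compute $BL^2$ and $ZL^2$ with the two Mathematica programs, extract a basis of $HL^2$, write the general linear deformation as a combination of basis cocycles, and impose the quadratic integrability condition \eqref{Leibnizcocycle} to isolate the single surviving direction $\varphi$. This matches the paper's approach, so there is nothing further to compare.
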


\begin{pr} Any linear deformation of algebra $L(1,1)$ (respectively, of the algebra $L(-1,1)$) is not integrable.
\end{pr}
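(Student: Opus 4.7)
The strategy follows the same template used for the earlier propositions on linear integrable deformations. The plan is, first, to apply the two Mathematica~10 programs described in Section~3.5 to the algebra $L(1,1)$ in order to output the general form of 2-coboundaries and of 2-cocycles, then to take the quotient and extract an explicit basis $\{\varphi_1,\dots,\varphi_k\}$ of $HL^2(L(1,1),L(1,1))$, and finally to impose the quadratic integrability condition on a generic representative.

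Concretely, I would set $\varphi = a_1\varphi_1+\dots+a_k\varphi_k \in ZL^2(L(1,1),L(1,1))$ and form the linear deformation $\mu_t = \mu+t\varphi$. After absorbing $t$ into the parameters $a_i$, the integrability requirement (\ref{Leibnizcocycle}) becomes a system of quadratic polynomial relations in $(a_1,\dots,a_k)$ obtained by running the triple $(x,y,z)$ through all basis elements of $L(1,1)$ and collecting coefficients in the basis of $L(1,1)$. I would then reduce this quadratic system, again with the help of the symbolic computation available from Section~3.5, to deduce that the only solution is $a_1=\cdots=a_k=0$, so that $\mu_t=\mu$ and no nontrivial linear deformation is integrable. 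The argument for $L(-1,1)$ differs from the one for $L(1,1)$ only through the sign of $\alpha_1=\pm 1$, and an identical run of the two programs should yield the same conclusion.

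The decisive feature that makes both $L(\pm 1,1)$ rigid while $L(1,0)$ admits a one-parameter integrable family should be the simultaneous presence of the nonzero products $[J,J]=\pm X_4$ and $[P_1,P_1]=[P_2,P_2]=X_1$, together with the induced nonvanishing values of $[J,T]$, $[P_i,T]$ and $[T,P_i]$ for $i=1,2$. These extra terms propagate into the quadratic part of the integrability relation and create obstructions that simply were not present in the $L(1,0)$ case; heuristically, the algebra is already sufficiently non-degenerate that any attempted perturbation clashes with the Leibniz identity at second order. The main obstacle in carrying the argument through is therefore not conceptual but combinatorial: the cocycle space and the number of triples to check are of cubic size in $\dim L(1,1)=8$, and the reduction of the resulting quadratic system in the parameters $a_i$ is only tractable through the symbolic verification provided by the two Mathematica programs, which is precisely why the authors rely on them rather than on hand computation.
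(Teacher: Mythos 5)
Your plan coincides with what the paper actually does: the authors compute a basis of $HL^2(L(\pm1,1),L(\pm1,1))$ with the two Mathematica programs of Section 3.5, impose the quadratic condition (\ref{Leibnizcocycle}) on a generic combination $\sum a_i\varphi_i$, and conclude that the resulting system forces all $a_i=0$; no further argument is given in the text. Your proposal is therefore essentially the same (computer-assisted) approach, with the same reliance on the symbolic verification and the same level of detail as the paper itself.
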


\begin{pr} An arbitrary linear integrable deformation of algebra $L(0,1)$ has the following forms:
$$\mu_t=\mu+a\varphi \ \mbox{with} \ \varphi : \varphi (J, J) = X_ 4. $$
\end{pr}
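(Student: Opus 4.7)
The plan is to mirror the method used for the preceding propositions about $L_1$, $L_2$, $L(1,0)$ and $L(1,1)$: first compute a basis of $HL^2(L(0,1),L(0,1))$ using the two Mathematica routines described in Subsection~3.5, then substitute a general class representative into the quadratic integrability condition \eqref{Leibnizcocycle} and solve the resulting polynomial system for the scalar parameters.

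First I would feed the multiplication table of $L(0,1)$ (i.e.\ the algebra from Lemma~\ref{lemma2} with $\alpha_1=0,\ \alpha_2=1$) into the $ZL^2$ program to get the general form of a $2$-cocycle, and into the $BL^2$ program to get the general $2$-coboundary. Subtracting images of coboundaries from cocycles by standard linear algebra (Gaussian elimination on the coefficient vectors in the fixed basis of $C^2(L(0,1),L(0,1))$) yields a basis $\{\varphi_1,\dots,\varphi_s\}$ of $HL^2(L(0,1),L(0,1))$ together with an explicit list of its nonzero entries on pairs of basis vectors. This is a purely mechanical, finite-dimensional computation that the programs are designed to perform.

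Next, I would form the generic cochain $\varphi=\sum_{k=1}^s a_k\varphi_k$ and impose the integrability condition
\[
\varphi(x,\varphi(y,z))-\varphi(\varphi(x,y),z)+\varphi(\varphi(x,z),y)=0
\]
for all triples $(x,y,z)$ of basis elements. Because $\varphi$ is quadratic in the $a_k$, this produces a finite system of homogeneous quadratic equations in $a_1,\dots,a_s$. Solving the system (again by the computer routine, following the pattern used for $L_1$, $L_2$, $L(1,0)$ and $L(1,1)$) one expects all $a_k$ except the one corresponding to the cocycle $\varphi$ with $\varphi(J,J)=X_4$ to be forced to zero; the surviving one-parameter family $\mu_t=\mu+a\varphi$ is automatically integrable since $\varphi(J,J)=X_4$ lies in $\operatorname{Ann}_r(L(0,1))$, so the quadratic tail vanishes trivially.

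The main obstacle is not conceptual but bookkeeping: for the $8$-dimensional algebra $L(0,1)$ the spaces $ZL^2$ and $BL^2$ have moderately large dimension, so picking cohomology class representatives in a form that makes the quadratic integrability equations transparent requires some care. In particular, the delicate step will be verifying that every cocycle whose integrability would be obstructed (by a nontrivial product with another generator of $HL^2$) is actually killed by the system, leaving only the $\varphi(J,J)=X_4$ direction. Once this reduction is carried out, the statement of the proposition follows by direct substitution, exactly as in the proofs of the previous four propositions.
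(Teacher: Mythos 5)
Your proposal follows essentially the same route as the paper: the authors also obtain these propositions by running the $BL^2$/$ZL^2$ programs of Subsection~3.5 on $L(0,1)$, extracting a basis of $HL^2$ by linear algebra, and then imposing the quadratic condition \eqref{Leibnizcocycle} on a generic combination of the basis cocycles. One small correction: the integrability of the surviving deformation is trivial not because $X_4\in Ann_r(L(0,1))$, but because $\varphi$ is supported only on the pair $(J,J)$ and takes the value $X_4$, on which $\varphi$ itself vanishes, so every term of the quadratic tail is zero.
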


\begin{pr} An arbitrary linear integrable deformation of algebra $L(0,0)$ has one of the following forms:

$$\mu_t^1=\mu+a_1\varphi_1+a_2\varphi_2, \quad  \mu_t^2=\mu+b_1\varphi_1+b_2\varphi_2-b_2\varphi_3,$$
 $$\mu_t^3=\mu+c_1\varphi_1+c_4\varphi_4, \quad \mu_t^4=\mu+d\varphi_5+d\varphi_6, \quad  \mu_t^5=\mu+k\varphi_7,$$
 where
$$\varphi_ 1: \varphi_ 1(J, J) = X_ 4, \quad  \varphi_2:\left\{\begin{array}{lll}
\varphi_ 2(P_ 2, J) =
 P_ 2, & \varphi_ 2(T, J) = T, & \varphi_ 2(X_ 3, J) = X_ 3, \\[1 mm]
\varphi_ 2(X_ 4, J) =
 X_ 4, & \varphi_ 2(J, P_ 2) = -P_ 2, & \varphi_ 2(J, T) = -T, \\[1 mm]
\end{array}\right.$$
$$\varphi_3:\left\{\begin{array}{lll}
\varphi_ 3(X_ 1, J) = X_ 1, & \varphi_ 3(X_ 3, J) =
 2 X_ 3, & \varphi_ 3([X_ 4, J) = X_ 4, \\[1 mm]
\varphi_ 3(X_ 1, P_ 2) = X_ 2, & \varphi_ 3(X_ 3, P_ 2) = -X_ 4. &
\end{array}\right.$$
$$\varphi_4:\left\{\begin{array}{lll}
\varphi_ 4(P_ 1, P_ 1) = X_ 1, & \varphi_ 4(T, P_ 1) =
 3 X_ 3, & \varphi_ 4(P_ 2, P_ 2) = X_ 1, \\[1 mm]
\varphi_ 4(T, P_ 2) = -3 X_ 2, & \varphi_ 4(J, T) =
 2 X_ 1, & \varphi_ 4(P_ 1, T) = -2 X_ 3, \\[1 mm]
\varphi_ 4(P_ 2, T) = 2 X_ 2. & &
\end{array}\right.$$
$$\varphi_5:\left\{\begin{array}{lll}
\varphi_ 5(P_ 2, P_ 1) = J, & \varphi_ 5(X_ 2, P_ 1) =
 X_ 1/4, & \varphi_ 5(X_ 4, P_ 1) = -X_ 3/4, \\[1 mm]
\varphi_ 5(P_ 1, P_ 2) = -J, & \varphi_ 5(X_ 3, P_ 2) =
 X_ 1/4, & \varphi_ 5(X_ 4, P_ 2) = X_ 2/4, \\[1 mm]
\varphi_ 5(X_ 2, T) = -X_ 3/2, & \varphi_ 5(X_ 3, T) = X_ 2/2. &
\end{array}\right.$$
$$\varphi_6:\left\{\begin{array}{lll}
\varphi_ 6(T, P_ 1) =
 P_ 2, & \varphi_ 6(X_ 2, P_ 1) = -X_ 1/4, & \varphi_ 6(X_ 4, P_ 1) =
 X_ 3/4, \\[1 mm]
\varphi_ 6(T, P_ 2) = -P_ 1, & \varphi_ 6(X_ 3, P_ 2) = -X_ 1/
   4, & \varphi_ 6(X_ 4, P_ 2) = -X_ 2/4, \\[1 mm]
\varphi_ 6(P_ 1, T) = -P_ 2, & \varphi_ 6(P_ 2, T) =
 P_ 1, & \varphi_ 6(X_ 2, T) = -X_ 3/2, \\[1 mm]
\varphi_ 6(X_ 3, T) = X_ 2/2. & &
\end{array}\right.$$
$$\varphi_7:\left\{\begin{array}{lll}
\varphi_ 7(P_ 1, X_ 1) = P_ 1, & \varphi_ 7(P_ 2, X_ 1) =
 P_ 2, & \varphi_ 7(T, X_ 1) = 2 T, \\[1 mm]
\varphi_ 7(X_ 2, X_ 1) = X_ 2, & \varphi_ 7(X_ 3, X_ 1) =
 X_ 3, & \varphi_ 7(X_ 4, X_ 1) = 2 X_ 4, \\[1 mm]
\varphi_ 7(J, X_ 2) = -P_ 2, & \varphi_ 7(P_ 2,
   X_ 2) = T, & \varphi_ 7(X_ 1, X_ 2) = -X_ 2, \\[1 mm]
\varphi_ 7(X_ 3, X_ 2) = X_ 4, & \varphi_ 7(J, X_ 3) =
 P_ 1, & \varphi_ 7(P_ 1, X_ 3) = -T, \\[1 mm]
\varphi_ 7(X_ 1, X_ 3) = -X_ 3, & \varphi_ 7(X_ 2,
   X_ 3) = -X_ 4, & \varphi_ 7(X_ 1, X_ 4) = -2 X_ 4. \\[1 mm]
\end{array}\right.$$
\end{pr}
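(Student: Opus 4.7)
The plan is to follow the pattern already established for $L_1$, $L_2$ and $L(1,0)$, $L(0,1)$: first compute an explicit basis of $HL^2(L(0,0),L(0,0))$ using the Mathematica programs of the previous subsection, and then impose the quadratic integrability condition \eqref{Leibnizcocycle} on a generic class in $HL^2(L(0,0),L(0,0))$. The expected output of the $BL^2$ and $ZL^2$ programs, after reducing modulo coboundaries, should produce exactly the seven independent cocycles $\varphi_1,\ldots,\varphi_7$ listed in the statement (a richer list than for $L(1,0)$ and $L(0,1)$ because of the vanishing of both deformation parameters $\alpha_1=\alpha_2=0$). So Step~1 is purely mechanical: feed the multiplication table of $L(0,0)$ from Lemma~\ref{lemma2} into the two programs, extract a basis of $ZL^2/BL^2$, and check that a convenient representative of each class has the displayed form.

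For Step~2, set $\varphi=\sum_{i=1}^{7}a_i\varphi_i$ and study the linear deformation $\mu_t=\mu+t\varphi$. Rescaling absorbs $t$, so integrability reduces to the purely algebraic condition
\[
\varphi(x,\varphi(y,z))-\varphi(\varphi(x,y),z)+\varphi(\varphi(x,z),y)=0
\qquad\forall\,x,y,z\in L(0,0),
\]
which is a system of quadratic equations in the seven unknowns $a_1,\ldots,a_7$. I would evaluate this on all ordered triples from the basis $\{J,P_1,P_2,T,X_1,X_2,X_3,X_4\}$ (again automated in Mathematica, since manually there are $8^3=512$ triples to check, most of them vanishing). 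The resulting polynomial ideal should factor into a small number of irreducible components, each corresponding to one of the five families in the statement.

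For Step~3, solve the system. I expect the non-trivial quadratic relations to involve cross-products such as $a_2a_3$, $a_2a_4$, $a_3a_4$, $a_5a_7$, $a_6a_7$, $a_4a_5$, etc., and the combinatorics of admissible vanishings to produce exactly the advertised strata: the two-parameter planes $\mu_t^1$ (with $a_2,a_3,\ldots,a_7$ cut to $a_1\varphi_1+a_2\varphi_2$), $\mu_t^2$ (the locus $a_2+a_3=0$ which forces the coefficient of $\varphi_3$ to equal $-b_2$), $\mu_t^3=a_1\varphi_1+a_4\varphi_4$, the diagonal $\mu_t^4=d(\varphi_5+\varphi_6)$, and the single line $\mu_t^5=k\varphi_7$. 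Each piece would then be verified to genuinely satisfy \eqref{Leibnizcocycle} by direct substitution.

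The hard part will be the bookkeeping: showing that the five listed families exhaust the solution variety, and in particular that no additional mixed deformations (for example, involving simultaneously $\varphi_4$ and $\varphi_5$ or $\varphi_7$ and $\varphi_2$) survive the Leibniz test. This is where the computer algebra becomes essential; a primary decomposition of the ideal generated by the quadratic relations should confirm irreducible components matching exactly $\mu_t^1,\ldots,\mu_t^5$. Once this decomposition is obtained, the proposition follows.
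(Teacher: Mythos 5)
Your proposal follows essentially the same route as the paper: the paper gives no written argument for this proposition at all, delegating both the computation of a basis of $HL^2(L(0,0),L(0,0))$ via the $BL^2$/$ZL^2$ Mathematica programs and the verification of the quadratic integrability condition \eqref{Leibnizcocycle} on a generic cocycle to computer algebra, exactly as you describe. The one caveat is that the paper explicitly says it \emph{omits} the complete list of basis elements of $HL^2$ for this algebra, so the cohomology basis is presumably strictly larger than the seven cocycles $\varphi_1,\dots,\varphi_7$ you assume; your generic combination must range over the full basis, with the coefficients of the unlisted cocycles being forced to zero by the integrability equations, before the exhaustiveness of the five families can be concluded.
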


\begin{pr} An arbitrary linear integrable deformation of algebra $M_1$ is
$$\mu_t=\mu+a_1\varphi_1+a_2\varphi_2+a_3\varphi_3+a_4\varphi_4$$
with
$$\varphi_1:\left\{\begin{array}{lll}
\varphi_ 1 (X_ 3, P_ +) = X_ 4, & \varphi_ 1 (X_ 3, T) = i /2 X_ 2, &
\end{array}\right.$$
$$\varphi_2:\left\{\begin{array}{lll}
\varphi_ 2 (P_ +, J) = P_ +, & \varphi_ 2 (T, J) = T, & \varphi_ 2 (X_ 2, J) = X_ 2 \\[1mm]
\varphi_ 2 (X_ 4, J) = X_ 4, & \varphi_ 2 (J, P_ +) = -P_ +, & \varphi_ 2 (J, T) = -T, \\[1mm]
\end{array}\right.$$
$$\varphi_3:\left\{\begin{array}{lll}
\varphi_ 3 (P_ -, J) = P_ -, & \varphi_ 3 (T, J) = T, & \varphi_ 3 (X_ 3, J) = -X_ 3, \\[1mm]
\varphi_ 3 (X_ 4, J) = -X_ 4, & \varphi_ 3 (J, P_ -) = -P_ -, & \varphi_ 3 (J, T) = -T, \\[1mm]
\end{array}\right.$$
$$\varphi_4:\left\{\begin{array}{lll}
\varphi_ 4 (X_ 1, J) = X_ 1, & \varphi_ 4 (X_ 2, J) = X_ 2, & \varphi_ 5 (X_ 3, J) = X_ 3, \\[1mm]
\varphi_ 4 (X_ 4, J) = X_ 4. & &
\end{array}\right.$$
\end{pr}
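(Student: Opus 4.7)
The approach proceeds in three stages. First, I would feed the multiplication table of $M_1$ into the two Mathematica routines described in the previous subsection. These produce the general forms of an element of $BL^2(M_1,M_1)$ and of an element of $ZL^2(M_1,M_1)$, and a straightforward linear-algebra reduction of the output then yields a basis of $HL^2(M_1,M_1)$. The assertion to confirm at this stage is that this basis consists of exactly the four cocycles $\varphi_1,\varphi_2,\varphi_3,\varphi_4$ listed in the statement: each must be verified against \eqref{E.Z2} directly, and one must check that no nontrivial combination $\sum a_j\varphi_j$ lies in the image of the coboundary map \eqref{E.B2}.

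Second, I would form the generic linear deformation $\mu_t=\mu+a_1\varphi_1+a_2\varphi_2+a_3\varphi_3+a_4\varphi_4$, where the deformation parameter $t$ has been absorbed into the coefficients $a_j$. Since every summand already belongs to $ZL^2$, the linear obstruction $d^2\varphi=0$ is automatic, and the only remaining requirement for $\mu_t$ to be a Leibniz bracket is the quadratic integrability relation \eqref{Leibnizcocycle} applied to the single cochain $\varphi:=a_1\varphi_1+a_2\varphi_2+a_3\varphi_3+a_4\varphi_4$.

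Third, I would evaluate this identity on every ordered triple drawn from the basis $\{J,P_+,P_-,T,X_1,X_2,X_3,X_4\}$, which produces a system of homogeneous polynomials of degree two in $a_1,a_2,a_3,a_4$ that must vanish identically when expanded against the basis of $M_1$. Structural features should make this tractable: $\varphi_1$ and $\varphi_4$ take values entirely inside the ideal $I$, and \eqref{eq1} shows that $I$ behaves close to a right annihilator with only a short list of nonzero right actions; meanwhile $\varphi_2$ and $\varphi_3$ modify only brackets one of whose slots is $J$ or $T$. Consequently most of the $8^3$ candidate triples immediately kill the inner $\varphi$, and the remaining ones should be shown to cancel pairwise, so that no polynomial constraint on $(a_1,a_2,a_3,a_4)$ is generated and the full four-parameter family is integrable, exactly as stated.

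The main obstacle is the sheer bulk of the last step: verifying the quadratic relation on $512$ triples with a four-parameter ansatz is well beyond the comfort zone of a by-hand calculation, which is precisely why the computer-algebra routines developed in the preceding subsection are used to discharge both the computation of $HL^2(M_1,M_1)$ and the integrability check symbolically.
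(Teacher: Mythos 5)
Your proposal matches the paper's approach exactly: the paper offers no written proof for this proposition and relies entirely on the Mathematica routines of the preceding subsection to compute $BL^2$ and $ZL^2$, hence $HL^2$, and then to impose the quadratic integrability condition \eqref{Leibnizcocycle} on a general linear combination of cocycles. The one caution is that the paper explicitly says it omits the complete list of basis elements of $HL^2$ for these remaining algebras, so you should not presume that $HL^2(M_1,M_1)$ is spanned by $\varphi_1,\dots,\varphi_4$ alone; to establish the ``arbitrary'' direction of the claim you must run the integrability check on the full general cocycle produced by the programs and verify that the coefficients of any additional basis cocycles are forced to vanish, while the four listed directions survive unconstrained.
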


\begin{pr} An arbitrary linear integrable deformation of    algebra $M_2$ has one of the following forms: $$\mu_t^1=\mu+a_1\varphi_1+a_2\varphi_2+a_3\varphi_3, \quad \mu_t^2=\mu+b_3\varphi_3+b_4\varphi_4, \quad $$
with
$$\varphi_ 1 : \varphi_ 1 (J, J) = X_ 2, \quad \varphi_ 2: \varphi_ 2 (J, X_ 3) = X_ 2,$$
$$\varphi_ 3: \varphi_ 3 (X_ 3, P_+) = X_ 4, \quad   \varphi_ 3 (X_ 3, T) = i/2 X_ 2,$$
$$\varphi_4:\left\{\begin{array}{lll}
\varphi_ 4 (P_-, P_+) = J, &  \varphi_ 4 (P_+, P_-) = -J, &  \varphi_ 4 (X_ 2, P_-) = -i/2 X_ 1, \\[1 mm]
\varphi_ 4 (X_ 4, P_-) = -3 i/2 X_ 3,  &  \varphi_ 4 (X_ 1, T) =  X_ 1/4, &  \varphi_ 4 (X_ 2, T) = X_ 2/4, \\[1 mm] \varphi_ 4 (X_ 4, T) = -X_ 4/2. & &
\end{array}\right.$$

\end{pr}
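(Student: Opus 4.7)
The plan is to first compute a basis of $HL^2(M_2,M_2)$ by running the two Mathematica routines described in Subsection 3.5: one returns the general form of elements of $BL^2(M_2,M_2)$, the other of $ZL^2(M_2,M_2)$. Straightforward linear-algebra reduction of the second modulo the first then yields a basis of the cohomology. One checks directly that the four cochains $\varphi_1,\varphi_2,\varphi_3,\varphi_4$ listed in the statement are indeed $2$-cocycles (apply equation (\ref{E.Z2})) and that no nontrivial linear combination of them is a coboundary in the sense of (\ref{E.B2}), so they form a basis of $HL^2(M_2,M_2)$.

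Next, take a general linear deformation
$$\mu_t=\mu+t\varphi,\qquad \varphi=a_1\varphi_1+a_2\varphi_2+a_3\varphi_3+a_4\varphi_4,$$
where $\mu$ denotes the multiplication law of $M_2$. Since each $\varphi_i\in ZL^2(M_2,M_2)$, the terms linear in $t$ in the Leibniz identity vanish automatically, so integrability reduces to the single quadratic obstruction (\ref{Leibnizcocycle}):
$$\varphi(x,\varphi(y,z))-\varphi(\varphi(x,y),z)+\varphi(\varphi(x,z),y)=0$$
for all triples of basis vectors $x,y,z\in\{J,P_+,P_-,T,X_1,X_2,X_3,X_4\}$. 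After absorbing $t$ into the coefficients $a_i$, this becomes a finite system of polynomial equations in $a_1,a_2,a_3,a_4$ with quadratic right-hand sides.

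I would then run the Leibniz identity test (as a separate short Mathematica script, analogous to the $ZL^2$ routine) on $\mu+\varphi$, collect the coefficients of the basis vectors on the right-hand side, and read off the resulting quadratic constraints. Given the shapes of the four cocycles, $\varphi_1$ only alters $[J,J]$, $\varphi_2$ only alters $[J,X_3]$, $\varphi_3$ only alters $[X_3,P_+]$ and $[X_3,T]$, while $\varphi_4$ has a rich interaction with $P_\pm$, $T$ and $X_1,X_2,X_4$. The self-compositions of $\varphi_1,\varphi_2,\varphi_3$ should collapse to zero, while the cross-terms involving $\varphi_4$ should produce exactly the obstructions $a_1a_4=0$ and $a_2a_4=0$. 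Solving these yields the dichotomy $a_4=0$, giving the family $\mu_t^1=\mu+a_1\varphi_1+a_2\varphi_2+a_3\varphi_3$, or $a_1=a_2=0$, giving the family $\mu_t^2=\mu+b_3\varphi_3+b_4\varphi_4$.

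The main obstacle will be the sheer volume of bookkeeping in the quadratic term, because $\varphi_4$ produces many nonzero entries and therefore many bilinear terms that must be tracked across the eight-dimensional basis. This is exactly the step where the computer verification described in Subsection 3.5 is essential: it mechanically generates the full list of polynomial obstructions and isolates the two irreducible components of the solution set without the risk of manual omission.
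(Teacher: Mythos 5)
Your proposal is correct and follows essentially the same route as the paper: the paper gives no written proof for this proposition, stating only that the basis of $HL^2$ and the integrability check are obtained via the Mathematica programs of Subsection 3.5, which is precisely the computation you outline (compute $ZL^2$ and $BL^2$, reduce to a basis of $HL^2$, then impose the quadratic obstruction (\ref{Leibnizcocycle}) on $\mu+\sum a_i\varphi_i$ and solve the resulting system, whose solution set splits into the two components $a_4=0$ and $a_1=a_2=0$). Your anticipated obstructions $a_1a_4=a_2a_4=0$ are exactly consistent with the stated dichotomy, so nothing is missing relative to the paper's own (computer-assisted) argument.
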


\section*{Acknowledgments}

S. Uguz thanks HUBAK 13139 for the partial support. B.A. Omirov
thanks TUBITAK (2221 Program) for supports during his stay in
Turkey. He was also supported by Ministerio de Econom\'ia y
Competitividad (Spain), grant MTM2013-43687-P (European FEDER
support included) and by the Grant No.0828/GF4 of Ministry of Education and Science of the Republic of Kazakhstan. The second author was supported by Xunta de Galicia, grant GRC2013-045 (European FEDER
support included).

\end{document}